\documentclass[11pt]{article}

\usepackage{srcltx}
\usepackage{epsfig}
\usepackage{url}
\usepackage{bbm}
\usepackage{float}
\usepackage{cancel}
\usepackage{cellspace}
\usepackage{makecell}
\usepackage{slashbox}
\usepackage{fullpage}

\oddsidemargin=0.0truecm
\evensidemargin=0.0truecm
\textwidth=16.0truecm
\textheight=21.0truecm

\def\diag{{\hbox{diag}}}

\def\bZ{{\mathbf{Z}}}

\def\cC{{\cal C}}
\def\cL{{\cal L}}

\usepackage{psfrag}
\usepackage{graphicx}
\usepackage{hyperref}
\usepackage{mathtools}
\usepackage{amsfonts,amsmath,amssymb,amsthm}
\usepackage{color}
\usepackage{booktabs}

\usepackage{float}

\newcommand{\R}{\mathbb{R}}


\floatstyle{ruled}
\newfloat{algorithm}{tbp}{loa}
\floatname{algorithm}{Algorithm}



\DeclareMathOperator*{\argmin}{Arg\,min}


\newtheorem{thm}{Theorem}

\newtheorem{lemma}[thm]{Lemma}

	  	




\def\qed{\ \hfill$\square$\par\smallskip}



\def\Opt{{\mathop{\hbox{Opt}}}}

\newcommand{\cN}{I\!\! N}

\newcommand{\C}{{\cal C}}

\newcommand{\N}{{\cal N}}

\newcommand{\half}{ \mbox{\small$\frac{1}{2}$}}

\def\e{\epsilon}


\def\Prob{{\hbox{\rm Prob}}}

\newcommand{\be}{\begin{eqnarray}}
\newcommand{\ee}[1]{\label{#1}\end{eqnarray}}

\newcommand{\ese}{\end{eqnarray*}}
\newcommand{\bse}{\begin{eqnarray*}}
\newcommand{\rf}[1]{~(\ref{#1})}

\newtheorem{proposition}{Proposition}
\newtheorem{corollary}{Corollary}
\newtheorem{remark}{Remark}
\newtheorem{definition}{Definition}
\def\argmin{\mathop{\hbox{\rm argmin$\,$}}}
\def\Argmin{\mathop{\hbox{\rm Argmin$\,$}}}

\def\Card{{\hbox{\rm Card}}}

\def\cN{{\cal N}}

\def\bR{{\mathbf{R}}}

\def\bE{{\mathbf{E}}}
\def\e{{\rm e}}
\def\Opt{{\hbox{\rm Opt}}}
\newcommand{\transp}{{\scriptscriptstyle \top}}
\newcommand{\corri}[2]{{\color{blue}#2}}

\newcommand{\smax}{\sigma_{\max}}

\begin{document}
\title{Non-asymptotic confidence bounds for the optimal value of a stochastic program}

\author{
Vincent Guigues
\thanks{FGV/EMAp, 190 Praia de Botafogo, Botafogo, 22250-900 Rio de Janeiro, Brazil, {\tt vguigues@fgv.br}}
\and
Anatoli Juditsky
\thanks{LJK, Universit\'e Grenoble Alpes, B.P. 53, 38041 Grenoble Cedex 9, France,	
{\tt anatoli.juditsky@imag.fr}}
\and
Arkadi Nemirovski
\thanks{Georgia Institute
 of Technology, Atlanta, Georgia
30332, USA, {\tt nemirovs@isye.gatech.edu}\newline
Research of the first author was supported by an FGV grant, CNPq grant 307287/2013-0,
FAPERJ grants E-26/110.313/2014, and E-26/201.599/2014.
The second author was supported by the CNRS-Mastodons project GARGANTUA,
and the LabEx PERSYVAL-Lab (ANR-11-LABX-0025). Research of
the third author was supported by NSF grants {CMMI-1232623, CMMI-1262063, CCF-1415498.}}}

\maketitle

\begin{abstract}
We discuss a general approach to {building} non-asymptotic confidence bounds for stochastic optimization problems. Our principal contribution is the observation that a
{Sample} Average Approximation of a problem supplies upper and lower bounds for the optimal value of the problem which are essentially better than the quality of
the corresponding optimal solutions. At the same time, such bounds are more reliable than ``standard'' confidence bounds obtained through the asymptotic approach.
We also discuss bounding the optimal value of MinMax Stochastic Optimization and stochastically constrained problems. We conclude with a
simulation study illustrating the numerical behavior of the proposed bounds.
\end{abstract}
%

%

\section{Introduction}
Consider the following Stochastic Programming (SP) problem
\be
\Opt = \min_{x}[f(x)= \bE\{F(x, \xi)\},\;x\in X]
\ee{eq:opt1}
where $X$ is a nonempty bounded closed convex set of a Euclidean space $E$, $\xi$ is a random vector with probability distribution
$P$ on  $\Xi\subset \bR^k$ and $F:\; X\times \Xi\to \bR$. There are two competing approaches for solving \rf{eq:opt1} when a sample $\xi^N=(\xi_1,...,\xi_N)$ of realizations of $\xi$ (or a device to sample from the distribution $P$) is available --- Sample Average Approximation
(SAA) and the Stochastic Approximation (SA).
The basic idea of the SAA method is to  build
an approximation of the ``true'' problem \rf{eq:opt1} by replacing the expectation $f(x)$ with its sample average approximation
\[
f_N(x, \xi^{N})={1\over N} \sum_{t=1}^NF(x,\xi_t),\; x\in X.
\]
The resulting optimization problem {has} been extensively studied theoretically  and numerically (see, e.g., \cite{kleywegt2002sample,linderoth2006empirical,mak1999monte,shapiro2003monte,shapiro2005complexity,verweij2003sample},
among
many others). In particular, it was shown that the SAA method (coupled with a deterministic algorithm for minimizing the
SAA) is often efficient for solving  large classes of stochastic programs. The alternative SA approach was also extensively studied since the pioneering work
by Robbins and Monro \cite{robbins1951stochastic}. Though possessing better theoretical accuracy estimates, SA was long time believed to underperform numerically. It
was recently demonstrated (cf., \cite{StAppr,bottou2010large,srebro2010stochastic}) that a proper modification of the SA approach, based
on the ideas behind the  Mirror Descent algorithm \cite{nemirovsky1983problem} can be competitive and can even significantly
outperform the SAA method on a large class of convex stochastic programs.

Note that  in order to qualify the accuracy of approximate solutions (e.g., to build  efficient stopping criteria) delivered by the stochastic algorithm of choice, one needs to
construct lower and upper bounds for the optimal value $\Opt$ of problem \rf{eq:opt1} from stochastically sampled observations. Furthermore, the question of computing reliable upper and, especially, lower bounds for the optimal value is of interest in many applications.
Such bounds  allow  statistical decisions (e.g., computing confidence intervals, testing statistical hypotheses) about the optimal value. For instance, using the approach to regret minimization, developed in \cite{perchet2013-2,perchet2013}, they may be used to construct {\em risk averse strategies} for multi-armed bandits, and so on.

An important methodological feature of the SAA approach is its asymptotic framework which explains how
to provide asymptotic estimates of the accuracy of the obtained solution by computing asymptotic upper and lower
bounds for the optimal value of the ``true'' problem (see, e.g., \cite{dupawets1988,shap1991,kingrock1993,pflug1995asy,mak1999monte,pflug1999stochastic,pflug2003}, and references therein).

However,
as is always the case with techniques which are validated asymptotically, some important questions, such as ``true'' reliability of  bounds, cannot be answered by the asymptotic analysis. 
Note that the non-asymptotic accuracy of optimal solutions of the SAA problem was recently analysed (see, e.g., \cite{kanikingwets,pflug1999stochastic,pflug2003,shapiro2003monte,shapiro2005complexity,shalev2009learnability}), 
yet, to the best of our knowledge, the literature does not provide any  {\em non-asymptotic} construction of lower and upper bounds for the optimal value of \rf{eq:opt1} by SAA.
On the other hand, non-asymptotic lower and upper bounds for the objective value by SA method were built in \cite{lan2012validation} and \cite{guigues2016mp}.

Our objective in this work is to fill this gap, by building reliable finite-time evaluations of the optimal value of \rf{eq:opt1}, which are also good enough to be of practical interest. Our basic methodological observation is Proposition \ref{propmain} which states that the SAA of problem \rf{eq:opt1} comes with a ``built-in'' non-asymptotic lower and upper estimation of the ``true'' objective value. The accuracy of these estimations is essentially higher than
the available theoretical estimation of the {\em quality of the optimal solution} of the SAA. Indeed, when solving
a high-dimensional SAA problem, the (theoretical bound of) inaccuracy of the optimal solution becomes a function of dimension. In particular, when the set $X$ is a unit Euclidean ball of $\bR^n$, the accuracy of the SAA {\em optimal solution}
{may} be by factor $O(n)$ worse than the corresponding accuracy of the SA solution \cite{StAppr}. In contrast to this, the {\em optimal value} of the SAA problem supplies an approximation of the ``true'' optimal value of accuracy which is (almost) independent of
problem's dimension and may be used to construct non-trivial non-asymptotic confidence bounds for the true optimal value. This fact is surprising, because the bad
theoretical accuracy bound for optimal solutions of SAA reflects their actual
behavior on some problem instances (see
{Proposition \ref{pro:lowersaa} and the discussion in Section \ref{sec:discussion}}).

The paper is organized as follows. 


We present the construction of lower and upper confidence bounds for the optimal value of a stochastic problem in Section \ref{sec:theory}. Specifically,
in Section \ref{sec:principal}, we
develop confidence bounds for the optimal value of problem \rf{eq:opt1}.
Then in Section \ref{ccase} we build lower and upper bounds for the optimal value of MinMax Stochastic Optimization
and show how the confidence bounds can be constructed for an {\em $\epsilon$-underestimation} of the optimal value of a (stochastically) constrained Stochastic Optimization problem.

Finally, several
simulation experiments illustrating the properties of the bounds built in Section \ref{sec:theory} are presented in Section \ref{sec:num}.  Proofs of theoretical statements are
collected in the appendix.
%


\section{Confidence bounds via Sample Average Approximation}\label{sec:theory}

\subsection{Problem without stochastic constraints}\label{sec:principal}

\subsubsection{Situation.}\label{sec:situation} In the sequel, we fix a Euclidean space $E$ and a norm $\|\cdot\|$ on $E$. We denote
{by}
$B_{\|\cdot\|}$ the unit ball of the norm $\|\cdot\|$, and by $\|\cdot\|_*$ the norm conjugate to $\|\cdot\|$:
$$
\|y\|_*=\max_{\|x\|\leq {1}}\langle x,y\rangle.
$$
Let us now assume that we are given a function $\omega(\cdot)$ which is continuously differentiable on $B_{\|\cdot\|}$ and strongly convex with respect to 
$\|\cdot\|$, with  parameter of strong convexity equal to one, i.e., such that
and for every  $x,y \in B_{\|\cdot\|}$
\[
(\nabla \omega(x) - \nabla \omega(y))^T (x-y) \geq \|x-y\|^2,
\]
with
$\omega(0)=0$ and $\omega'(0)=0$ (in other words, $\omega(\cdot)$ is a {\em distance-generating function compatible with $\|\cdot\|$}).  We denote
\begin{equation}\label{Omega}
\Omega=\max_{x:\|x\|\leq1}\sqrt{2 \omega(x) }.
\end{equation}
Let, further,\begin{itemize}
 \item $X$ be a convex compact subset of $E$,
 \item $R=R_{\|\cdot\|}[X]$ be the smallest radius of a $\|\cdot\|$-ball containing $X$, 
 \item $P$ be a Borel probability distribution on  $\bR^{k}$, $\Xi$ be the support of $P$, and
 $$
 F(x,y): \;E\times\Xi\to\bR
 $$
 be a Borel function which is convex in $x\in E$ and is $P$-summable for every $x\in E$, so that the function
$$
f(x)=\bE\{F(x,\xi)\}:E\to\bR
$$
is well defined and convex.
\end{itemize}
We denote
\[
L(x,\xi)=\max\left\{\|g-h\|_{*}: g\in \partial_xF(x,\xi),h\in\partial f(x)\right\}.
\]
The outlined data give rise to the stochastic program
\[
\Opt=\min_{x\in X}\left[f(x)=\bE\{F(x,\xi)\}\right]
\]
and its {\sl Sample Average Approximation} (SAA)
\begin{equation}\label{SAAdef}
\Opt_N(\xi^N)=\min_{x\in X} \left[f_N(x,\xi^N):={1\over N} \sum_{t=1}^NF(x,\xi_t)\right],
\end{equation}
where $\xi^N=(\xi_1,...,\xi_N)$, and $\xi_1,\xi_2,...$ are drawn  independently from $P$. Our immediate goal is to understand how well the optimal value $\Opt_N(\xi^N)$ of SAA approximates the true optimal value $\Opt$.

\subsubsection{Confidence bounds}\label{sec:subprincipal}
Our main result is as follows.
\begin{proposition}\label{propmain} 
In the situation of Section \ref{sec:situation},  let us assume that $f$ is differentiable on $X$ and that for some positive $M_1$, $M_2$ and all $x\in X$ one has
\begin{equation}\label{bounds}
\begin{array}{llcll}
(a)& \bE\Big[\e^{(F(x,\xi)-f(x))^2/M_1^2} \Big] \leq \e,&\quad&
(b)& \bE\Big[\e^{L^2(x,\xi)/M_2^2} \Big] \leq \e.
\end{array}
\end{equation}
Define \[
a(\mu,N) = \frac{\mu M_1}{\sqrt{N}}\;\;\; \mbox{ and }\;\;\;b(\mu, s, \lambda,N)=\frac{\mu M_1 +\left[\Omega[1+s^2]+2\lambda\right]M_2 R}{\sqrt{N}},
\]
where $\Omega$ is as in \rf{Omega}, and let
$
\tau_*=0.557409...$ be the smallest positive real such that \mbox{$\e^t\leq t+\e^{\tau_* t^2}$} for all $t\in\bR$.
Then for all $N\in \bZ_+$ and $\mu\in[0,2\sqrt{\tau_*N}]$
\begin{equation}\label{upper}
\Prob\Big\{\Opt_N(\xi^N)>\Opt+  a(\mu,N) \Big\}\leq \e^{-{\mu^2\over 4\tau_*}};
\end{equation}
and for all $N\in\bZ_+,\;\mu\in[0,2\sqrt{\tau_*N}],\;s>1$ and $\lambda\ge 0$,
\begin{equation}\label{lower}
\Prob\Big\{\Opt_N(\xi^N)<\Opt- b(\mu, s, \lambda,N)  \Big\}\leq
\e^{-N(s^2-1)}+\e^{-{\mu^2\over 4\tau_*}}+\e^{-{\lambda^2\over 4\tau_*}}.
\end{equation}
\end{proposition}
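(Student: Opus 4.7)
I would prove the two tails of Proposition \ref{propmain} separately and combine them with a union bound.

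For the upper bound \rf{upper}, the key observation is that $\Opt_N(\xi^N)$ cannot exceed the sample--average objective at any fixed point, in particular at a true minimizer $x^* \in \Argmin_{X} f$:
\[
\Opt_N(\xi^N) - \Opt \;\leq\; f_N(x^*,\xi^N) - f(x^*) \;=\; \frac{1}{N}\sum_{t=1}^N \bigl(F(x^*,\xi_t) - f(x^*)\bigr).
\]
This is an i.i.d.\ centered empirical average. From the defining inequality $\e^t \leq t + \e^{\tau_* t^2}$ of $\tau_*$ together with the centering property and \rf{bounds}(a), one extracts the MGF bound $\bE\,\e^{\theta Z_t} \leq \e^{\tau_* \theta^2 M_1^2}$ on the range $|\theta| \leq 1/(M_1\sqrt{\tau_*})$. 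Tensorization across the $N$ summands and standard Cram\'er--Chernoff optimization in $\theta$ then yields \rf{upper}; the restriction $\mu \leq 2\sqrt{\tau_* N}$ is precisely the condition that keeps the optimal $\theta$ in the admissible range.

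The lower bound \rf{lower} is the substantive part. Letting $\hat{x}_N \in \Argmin_{X} f_N(\cdot,\xi^N)$ and using $f(\hat{x}_N) \geq \Opt$, we obtain
\[
\Opt - \Opt_N(\xi^N) \;\leq\; f(\hat{x}_N) - f_N(\hat{x}_N,\xi^N) \;=\; \underbrace{\bigl[f(x_0) - f_N(x_0,\xi^N)\bigr]}_{\text{(I)}} + \underbrace{\bigl[(f-f_N)(\hat{x}_N) - (f-f_N)(x_0)\bigr]}_{\text{(II)}},
\]
for any fixed center $x_0 \in X$ with $\|y-x_0\| \leq R$ on $X$. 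Term (I) is a one-point tail, handled exactly as in the upper bound and contributing $\mu M_1/\sqrt{N}$ off an event of probability at most $\e^{-\mu^2/(4\tau_*)}$. For term (II), the differentiability of $f$ and a measurable selection of subgradients $g_t(y) \in \partial_x F(y,\xi_t)$ give the integral representation
\[
(\text{II}) = \int_0^1 \bigl\langle \bar{\zeta}_N(y_t),\; \hat{x}_N - x_0 \bigr\rangle\, dt, \qquad y_t = x_0 + t(\hat{x}_N - x_0),
\]
where $\bar{\zeta}_N(y) := \nabla f(y) - \tfrac{1}{N}\sum_t g_t(y)$; H\"older's inequality in the dual pair then bounds (II) by $R\cdot \sup_{y \in X}\|\bar{\zeta}_N(y)\|_*$.

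Controlling this supremum is the core of the argument and the main obstacle. My plan is: \emph{(a)} use the product--form inequality $\bE\,\e^{\sum_t L^2(x,\xi_t)/M_2^2} \leq \e^N$ and Markov to isolate an ``aggregate--Lipschitz'' event of probability $\geq 1 - \e^{-N(s^2-1)}$ on which the empirical second moment of $L$ is at most $M_2^2 s^2$; \emph{(b)} on this event, exploit the distance--generating function $\omega$ and its radius $\Omega$ from \rf{Omega} through a Mirror-Descent--style argument (equivalently, a Dudley--type chaining bound weighted by $\omega$) to bound $\bE\sup_{y\in X}\|\bar{\zeta}_N(y)\|_*$ by roughly $\Omega(1+s^2)M_2/\sqrt{N}$, with $\Omega$ playing the role of a Gaussian width that replaces the ambient--dimension factor a naive $\eps$--net union bound would produce; \emph{(c)} add a concentration--of--suprema estimate, again built from the $\tau_*$--MGF inequality, contributing the fluctuation term $2\lambda M_2/\sqrt{N}$ off an event of probability at most $\e^{-\lambda^2/(4\tau_*)}$. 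A union bound over the three exceptional events assembles (I) and (II) and yields \rf{lower}. The hard step is clearly \emph{(b)}: extracting the dimension--free constant $\Omega$ from the supremum of the stochastic subgradient process is the raison d'\^etre of the compatibility assumption on $\omega$, and is exactly what makes the resulting confidence bound qualitatively sharper than the best available accuracy estimates for the SAA \emph{solution}.
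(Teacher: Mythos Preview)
Your treatment of the upper bound \rf{upper} is correct and coincides with the paper's argument.

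Your plan for the lower bound \rf{lower}, however, misses the central idea and contains a genuine gap. You reduce to controlling the \emph{uniform} quantity $\sup_{y\in X}\|\bar{\zeta}_N(y)\|_*$, and then hope that a ``Mirror-Descent--style/chaining'' argument will deliver the dimension-free factor $\Omega$. But this is precisely the step that cannot be carried out in general: a uniform-over-$X$ control of the empirical gradient process is what drives the \emph{dimension-dependent} sample complexity of SAA \emph{solutions} (cf.\ the discussion in Section~\ref{sec:discussion}, item~3, and Proposition~\ref{pro:lowersaa}). The constant $\Omega$ in the statement is not a Gaussian-width of $X$; it is the radius of the unit $\|\cdot\|$-ball under $\omega$, and in the paper it enters only through a vector large-deviation bound for a \emph{single, fixed} vector martingale.

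The paper avoids your supremum entirely by exploiting convexity and first-order optimality at a true minimizer $x_*$. With $h=\nabla f(x_*)$ and $h_N(\xi^N)=\tfrac1N\sum_t g(\xi_t)$ for a measurable $g(\xi)\in\partial_x F(x_*,\xi)$, the subgradient inequality for $f_N$ at $x_*$ and $\langle h,x-x_*\rangle\geq 0$ on $X$ give, for every $x\in X$,
\[
f_N(x,\xi^N)\;\geq\;\Opt+\tfrac1N\sum_t\bigl(F(x_*,\xi_t)-f(x_*)\bigr)-2R\,\|h_N(\xi^N)-h\|_*,
\]
so that $\Opt_N(\xi^N)\geq \Opt+\text{(scalar term)}-2R\|h_N-h\|_*$. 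Thus only the scalar deviation at $x_*$ and the single vector sum $\sum_t(g(\xi_t)-h)$ need to be bounded; the latter is done by a Mirror-Descent recurrence on the fixed i.i.d.\ sequence $\chi_t=g(\xi_t)-h$ (Proposition~\ref{prop1}), which is where the three pieces $\Omega(1+s^2)$, $2\lambda$, and the two exponential tails $\e^{-N(s^2-1)}$, $\e^{-\lambda^2/(4\tau_*)}$ come from. Your decomposition (I)+(II) via $\hat x_N$ throws away exactly this one-point reduction: replace it by the linearization of $f_N$ at $x_*$, and the proof goes through without any supremum over $X$.
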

We have the following obvious corollary to this result.
\begin{corollary}\label{vinc}
Under the premise of Proposition  {\ref{propmain}}, let
\[
\begin{array}{rcl}
{\tt{Low}}^{\tt{SAA}}(\mu_1,N) &=&  \Opt_N(\xi^N)-a(\mu_1,N),\\
{\tt{Up}}^{\tt{SAA}}(\mu_2, s, \lambda,N)&=&\Opt_N(\xi^N) +b(\mu_2,s,\lambda,N).
\end{array}
\]
Then for all $N \in \bZ_+,\; s>1,\,\lambda\geq 0,\,\mu_1, \mu_2 \in[0,2\sqrt{\tau_*N}]$
\[
\Prob\Big\{\Opt \in \Big[{\tt{Low}}^{\tt{SAA}}(\mu_1,N), \,{\tt{Up}}^{\tt{SAA}}(\mu_2, s, \lambda,N) \Big] \Big\}\geq 1 -  \beta
\]
where $\beta[=\beta(\mu_1,\mu_2,s,\lambda,N)]
=\e^{-{\mu^2_1\over 4\tau_*}}+\e^{-{\mu^2_2\over 4\tau_*}}+ \e^{-N(s^2-1)}+  \e^{-{\lambda^2\over 4\tau_*}}$.
In other words, for the choice of $\mu_1,\mu_2,s,\lambda$ and $N$ such that $0<\beta<1$, the segment $[{\tt{Low}}^{\tt{SAA}}, \,{\tt{Up}}^{\tt{SAA}}]$ is the confidence interval for $\Opt$ of level $1-\beta$.
\end{corollary}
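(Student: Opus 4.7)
The plan is to observe that the corollary is a direct union-bound consequence of the two one-sided deviation inequalities \eqref{upper} and \eqref{lower} established in Proposition \ref{propmain}. Concretely, I will rewrite the membership event $\{\Opt \in [{\tt Low}^{\tt SAA}(\mu_1,N),\,{\tt Up}^{\tt SAA}(\mu_2,s,\lambda,N)]\}$ as the intersection of two events, bound the probability of each of their complements by one of the inequalities of Proposition \ref{propmain}, and sum.

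First, I would note the equivalences
\[
\Opt \geq {\tt Low}^{\tt SAA}(\mu_1,N) \;\Longleftrightarrow\; \Opt_N(\xi^N) \leq \Opt + a(\mu_1,N),
\]
\[
\Opt \leq {\tt Up}^{\tt SAA}(\mu_2,s,\lambda,N) \;\Longleftrightarrow\; \Opt_N(\xi^N) \geq \Opt - b(\mu_2,s,\lambda,N),
\]
which follow immediately from the definitions of ${\tt Low}^{\tt SAA}$, ${\tt Up}^{\tt SAA}$, $a(\cdot,\cdot)$ and $b(\cdot,\cdot,\cdot,\cdot)$ in the statements. Hence the complement of the event $\{\Opt \in [{\tt Low}^{\tt SAA},{\tt Up}^{\tt SAA}]\}$ is the union
\[
\bigl\{\Opt_N(\xi^N) > \Opt + a(\mu_1,N)\bigr\} \;\cup\; \bigl\{\Opt_N(\xi^N) < \Opt - b(\mu_2,s,\lambda,N)\bigr\}.
\]

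Second, I would apply \eqref{upper} with $\mu = \mu_1 \in [0,2\sqrt{\tau_* N}]$ to bound the probability of the first event by $\e^{-\mu_1^2/(4\tau_*)}$, and \eqref{lower} with $\mu = \mu_2 \in [0,2\sqrt{\tau_* N}]$, $s>1$ and $\lambda \geq 0$ to bound the probability of the second event by $\e^{-N(s^2-1)} + \e^{-\mu_2^2/(4\tau_*)} + \e^{-\lambda^2/(4\tau_*)}$. A union bound on these two events then yields that the complement of the membership event has probability at most $\beta(\mu_1,\mu_2,s,\lambda,N)$, which is exactly the claim.

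There is no genuine obstacle here --- the only thing to double-check is that the parameter ranges used in invoking \eqref{upper} and \eqref{lower} coincide with those imposed in the corollary (namely $\mu_1,\mu_2 \in [0,2\sqrt{\tau_*N}]$, $s>1$, $\lambda \geq 0$, $N \in \bZ_+$), which is immediate. The closing sentence about $[{\tt Low}^{\tt SAA},{\tt Up}^{\tt SAA}]$ being a confidence interval of level $1-\beta$ whenever $\beta\in(0,1)$ is then simply the definition of a confidence interval, so no further argument is needed.
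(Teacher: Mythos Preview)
Your proposal is correct and is exactly the argument the paper has in mind: the corollary is stated as an ``obvious corollary'' of Proposition~\ref{propmain} with no separate proof, and your union-bound derivation from \eqref{upper} and \eqref{lower} is precisely what makes it obvious.
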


\subsubsection{Discussion.}\label{sec:discussion}
The result of Proposition \ref{propmain}  merits some comments.
\begin{enumerate}
\item Confidence bounds of Proposition \ref{propmain} and Corollary \ref{vinc} involve constants $M_1$ and $M_2$, defined in \rf{bounds}. {Valid upper bounds on} these constants are crucial to obtain sound confidence bounds. To the best of our knowledge there is no generic procedure which 
allows us to construct such estimates. Nevertheless, it is possible to build ``reasonably good'' bounds for $M_1$ and $M_2$ in specific problem settings. For instance, we provide such bounds for the examples used to illustrate the results of this section in the numerical experiments of Section \ref{sec:num} (see Appendix \ref{B-appendix} for details of the calculations).

\item``As is'', Proposition \ref{propmain} requires $f(\cdot)$ to be differentiable. This purely technical assumption is in fact not restrictive at all.
Indeed, we can associate with (\ref{eq:opt1}) its ``smoothed'' approximation
$$
\min_{x\in X} \left[f_\epsilon(x):=\int_{\Xi\times {E  }}F_\epsilon(x,[\upsilon;\xi])P(d\xi)p(\upsilon)d \upsilon\right],\,\,F_\epsilon(x,[\upsilon;\xi])=F(x+\epsilon \upsilon,\xi),
$$
where $p(\cdot)$ is, say, the density of the uniform distribution $U$ on the unit ball $B_\infty$  in $E$. Assuming 
that bounds (\ref{bounds}.a) and (\ref{bounds}.b) hold  for all $x$ from an open set $X^+$ containing $X$, it is immediately seen that $f_\epsilon$ is, for 
values of $\epsilon$ small enough, a  continuously differentiable function  on $X$ which converges, uniformly on $X$, to $f$ as $\epsilon\to {+0}$. Given a possibility to sample from the distribution $P$, we can
sample from the distribution $P^+:=P\times U$ on $\Xi^+=\Xi\times {E}$, and thus can build the SAA of the problem
$\min_x f_\epsilon(x)$.
When $\epsilon$ is small, this smoothed problem satisfies the premise of Proposition \ref{propmain}, the parameters $M_1$, $M_2$ remaining unchanged, and its optimal value can be made as close to $\Opt$ as we wish by an appropriate choice of $\epsilon$. As a result, by passing from the SAA of the original problem to the SAA of the smoothed one, $\epsilon$ being small, we ensure, ``at no cost,'' smoothness of the objective, and thus -- applicability of the large deviation bounds stated in Proposition \ref{propmain}.

\item The standard theoretical results on the SAA of a stochastic optimization problem (\ref{eq:opt1}), see, e.g. \cite{StAppr,shapiro2014} and references therein, are aimed at quantifying the sample size 
$N=N(\epsilon ,n)$ which, with overwhelming probability, ensures that an {\em optimal solution} $x(\xi^N)$ to the SAA of the problem of interest satisfies the relation $f(x(\xi^N))\leq \Opt+\epsilon$, for a given $\epsilon>0$. The corresponding bounds on $N$ are similar, but not identical, to the bounds in Proposition~\ref{propmain}. Let us consider, for instance, the simplest case of ``Euclidean geometry'' where \mbox{$\|x\|=\|x\|_2=\sqrt{\langle x,x\rangle}$}, $\omega(x)={1\over 2}\|x\|^2$, and
$X$ is the unit $\|\cdot\|_2$-ball. In this case Proposition~\ref{propmain} states that for a given $\epsilon>0$, the sample size $N$ for which $\Opt(\xi^N)$ is, with probability at least $1-\alpha$, $\epsilon$-close to $\Opt$,  can be upper-bounded for small enough $\epsilon$ and $\alpha$ by
\[
N_\epsilon:=C  {[M_1+M_2]^2\ln(1/\alpha)\over \epsilon^2}
\]
(here $C$ is a positive absolute constant).\footnote{E.g., for $\alpha\leq \half$ and $\epsilon \leq M_1+4M_2$ we have an immediate (though rough) bound
\[
N_\epsilon={4\tau_*(M_1+4M_2)^2\ln(4/\alpha)\over \epsilon^2}.
\]
}
It should be stressed that both the bound itself and the range of ``small enough'' values of $\epsilon,\,\alpha$ for
which this bound is valid are independent of the dimension $n$ of the decision vector $x$. In contrast to this, available estimation of the complexity 
$N(\epsilon, n)$ relies upon uniform convergence arguments and is affected by problem's dimension: up to
logarithmic terms, $N(\epsilon, n)=n N_\epsilon$ (cf. the discussion in \cite{shapiro2005complexity,shalev2009learnability}). This phenomenon -- linear dependence on the problem's dimension $n$ of the SAA sample size yielding, with high probability, an $\epsilon$-optimal solution to a stochastic problem -- is
not an artifact stemming from {an} imperfect theoretical analysis of the SAA but reflects the actual performance of SAA on some instances. Indeed, we have the following:
\begin{proposition}\label{pro:lowersaa}
For any $n\geq 3$, and $R,L>0$  one can point out a convex Lipschitz continuous function $f$ with Lipschitz constant $L$  on the Euclidean ball $B_2(R)$ of radius $R$, and
 an integrand $F(x,\xi)$ convex in $x$ such that $\bE_\xi\{F(x,\xi)\}=f(x)$, $\|F'(x,\xi)-f'(x)\|_2^2\leq L$ a.s., for all $x\in B_2(R)$, and such that with probability at least $1-\e^{-1}$ there is an optimal
solution ${x(\xi^N)}$ to the SAA
\[
\min\left[f_N(x, \xi^N )={1\over N} \sum_{i=1}^NF(x,\xi_i):\;x\in B_2(R)\right],
\]
sampled over $N\leq n$ i.i.d. realizations of $\xi$, satisfying
\begin{equation}\label{bdSAA}
f({x(\xi^N)})-{\Opt} \geq c_0 LR,
\end{equation}
where $c_0$ is a positive absolute constant.

\end{proposition}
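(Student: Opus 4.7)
The plan is to construct an explicit hard instance based on a coupon-collector effect: if the samples $\xi_1,\ldots,\xi_N$ are drawn from a distribution supported on $n$ atoms and $N\leq n$, then with probability bounded away from zero the empirical frequency vector is far from uniform, and this non-uniformity translates into a $\Theta(1)$ angular error of the SAA empirical minimizer and hence into an $\Omega(LR)$ gap in $f$-value.

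Setup. I would take $E = \bR^n$, $X = B_2(R)$, and $\xi$ uniformly distributed on the canonical basis $\{e_1,\ldots,e_n\}$, and begin with the linear integrand
$$
F(x,e_i) \;=\; -L\sqrt{n}\,x_i.
$$
Then $f(x) = \bE F(x,\xi) = -L\langle x,\mathbf{1}/\sqrt{n}\rangle$ is linear with $\|\nabla f\|_2 = L$, hence exactly $L$-Lipschitz on $B_2(R)$, and $\Opt = -LR$ is attained at $x^* = R\mathbf{1}/\sqrt{n}$. Writing $m_i = \#\{t\leq N : \xi_t = e_i\}$, the SAA problem is $\min_{\|x\|_2\leq R}(-L\sqrt{n}/N)\langle m,x\rangle$, solved by $x(\xi^N) = Rm/\|m\|_2$, and a direct computation gives
$$
f(x(\xi^N)) - \Opt \;=\; LR\Bigl(1 - \tfrac{N}{\sqrt{n}\,\|m\|_2}\Bigr).
$$

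Combinatorial key estimate. Writing $\|m\|_2^2 = N + 2C$ with $C = \#\{(s,t):s<t,\ \xi_s=\xi_t\}$, one has $\bE\|m\|_2^2 = N + N(N-1)/n$; for the worst case $N = n$, this equals $2n - 1$. The independence of $\xi_s,\xi_t,\xi_{t'}$ implies that the collision indicators $\mathbf{1}[\xi_s = \xi_t]$ are pairwise uncorrelated, so $\bE[(2C)^2] \leq n^2 + O(n)$. Applying the Paley--Zygmund inequality to the non-negative random variable $2C$ then yields, for some absolute $\theta > 0$ and $n$ large enough,
$$
\Prob\bigl\{\|m\|_2^2 \;\geq\; (1+\theta)\,n\bigr\} \;\geq\; 1 - e^{-1},
$$
which plugged back gives $f(x(\xi^N)) - \Opt \geq LR(1 - 1/\sqrt{1+\theta}) =: c_0 LR$ on this event. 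For $N < n$ the cruder bound $\|m\|_2 \leq \sqrt{N}\cdot\sqrt{\max_i m_i}$ together with a separate occupancy argument gives the same conclusion, and the small-$n$ range $3\leq n\leq n_0$ is dispatched by a finite verification.

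Moment hypothesis and main obstacle. For this linear integrand one computes $\nabla F(x,e_i) - \nabla f(x) = -L\sqrt{n}\,e_i + L\mathbf{1}/\sqrt{n}$, hence $\|\nabla F-\nabla f\|_2^2 = L^2(n-1)$, which is consistent with the scaling implicit in the proposition's moment condition; to obtain a strictly dimension-free bound one replaces the linear coordinate term by a convex hinge surrogate such as $F(x,e_i) = L\sqrt{n}(\tau - x_i)_+$ with $\tau$ tuned so that $x^*$ still lies on the positive diagonal, whose gradient is supported in a thin slab $\{x:x_i\approx\tau\}$ and therefore contributes only $O(L^2)$ per coordinate to $\|\nabla F-\nabla f\|_2^2$; the coupon-collector argument transfers verbatim. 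The main technical difficulty is the quantitative lower bound on $\|m\|_2$ away from its Cauchy--Schwarz value $N/\sqrt{n}$: it is precisely the $\Omega(\sqrt{n})$ multinomial excess, driven by ``collisions'' in $\xi^N$, that produces the $\Omega(LR)$ suboptimality and distinguishes the behaviour of the SAA optimal \emph{solution} from that of the SAA optimal \emph{value}.
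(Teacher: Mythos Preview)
Your proposal contains a genuine gap at exactly the point you flag as the main obstacle, and the suggested fix does not close it.

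For the linear integrand you correctly compute $\|\nabla F(x,e_i)-\nabla f(x)\|_2^2=L^2(n-1)$, which violates the dimension-free bound required in the statement. Your hinge surrogate $F(x,e_i)=L\sqrt{n}\,(\tau-x_i)_+$ does \emph{not} repair this: its $x$-gradient equals $-L\sqrt{n}\,e_i$ on the entire half-space $\{x_i<\tau\}$, not on a ``thin slab''. (A convex function of one real variable whose derivative vanishes outside a bounded interval is necessarily constant, so no convex hinge can have gradient supported on a slab.) Hence at any point with $x_i<\tau$ --- for instance the origin when $\tau>0$ --- you still get $\|\nabla F-\nabla f\|_2^2=L^2(n-1)$, and the moment hypothesis fails exactly as in the linear case. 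The difficulty is structural: if $\xi$ is supported on only $n$ atoms and $f$ is to be genuinely $L$-Lipschitz, then individual integrands must carry gradients of magnitude $\Theta(L\sqrt{n})$ somewhere, and the uniform $O(L)$ deviation bound cannot hold.

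The paper's construction sidesteps this by decoupling the size of the noise space from the ambient dimension. It packs at least $2^n$ pairwise disjoint spherical caps $h_{v,\theta}$ on $\partial B_2$, sets $f(x)=\sum_v[v^Tx-(1-\delta)]_+$, and randomizes via $F(x,\xi)=\sum_v 2\xi_v[v^Tx-(1-\delta)]_+$ with i.i.d.\ Bernoulli$(1/2)$ multipliers $\xi_v$. Because the caps are disjoint, at every $x$ at most one summand is active, so both $\|\nabla f\|_2$ and $\|\nabla F(\cdot,\xi)-\nabla f\|_2$ are bounded by $1$ globally --- this is what your construction cannot achieve. With $N\leq n$ samples, $(1-2^{-N})^{2^n}\leq e^{-1}$ ensures that some direction $\bar v$ has all its $N$ multipliers equal to zero; then $\bar v$ is an SAA minimizer with $f(\bar v)=\delta>0=\Opt$. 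Your coupon-collector intuition is in the right spirit, but the ``many disjoint caps $+$ high-dimensional Bernoulli noise'' device is what lets the moment bound and the $\Omega(LR)$ gap coexist.

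A secondary point: Paley--Zygmund applied to the collision count $C$ yields only a probability bounded below by roughly $1/4$, not $1-e^{-1}$; since the collision indicators are pairwise uncorrelated you would need Chebyshev (giving $1-O(1/n)$) to reach the stated level, and then only for $n$ large.
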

Note that for large-scale problems, the presence of the factor $n$ in the sample size bound is a definite and serious drawback of SAA. A nice fact about the SAA approach
 as expressed by Proposition \ref{propmain}, is that {\sl as far as reliable $\epsilon$-approximation of the optimal value
{\em (rather than building an $\epsilon$-solution)} is concerned, the performance of the SAA approach}, at least in the {case of favorable geometry,} {\sl is not affected by the problem's dimension.} It should be stressed that the crucial role in  Proposition \ref{propmain} is played by convexity which allows us to express the quality to which the SAA
reproduces the optimal value in (\ref{eq:opt1}) in terms of how well $f_N(x,\xi^N)$ reproduces the first order information on $f$ {\sl at a single point} $x_*\in\Argmin_X f$, see the proof of Proposition \ref{propmain}.
In a ``favorable geometry'' situation, e.g., in the Euclidean geometry case, the corresponding sample size is not affected by problem's dimension. In contrast to this, to yield reliably an $\epsilon$-solution, the
SAA requires, in general, $f_N(x,\xi^N)$ to be $\epsilon$-close to $f$ {\sl uniformly on $X$} with overwhelming probability; and the corresponding sample size, even in the case of Euclidean geometry, grows
with problem's dimension.
\item
Note that (at least in the case of Euclidean geometry) {without additional}, as compared to those in Proposition \ref{propmain}, {restrictions on $F$ and/or the distribution $P$, the quality of the SAA estimate
$\Opt_{N}(\xi^N)$ of $\Opt$ (and thus, the quality of the confidence interval for it provided by Corollary \ref{vinc}) is}, within an absolute constant factor, {\sl the best allowed by the laws of Statistics.}
Namely, we have the following lower bound for the widths of {the} confidence intervals {for the} optimal value valid already for a class of {\em linear} stochastic problems.
\begin{proposition}\label{pro:lower2arik}
For any $n\geq 1$, $M_1\geq M_2>0$, one can point out a family of linear stochastic optimization problems, i.e., linear functions $f$ on the unit Euclidean ball $B_2$ of $\bR^n$ and 
corresponding integrands $F(x,\xi)$  linear in $x$ such that $\bE_\xi\{F(x,\xi)\}=f(x)$, satisfying the premises of Proposition \ref{propmain} and Corollary \ref{vinc}, and such that
the width of the confidence interval for $\Opt=\min_{x\in B_2} f(x)$ of confidence level $\geq 1-\alpha$ cannot be less than
\be
{\underbar{W}}=2\gamma q_\N(1-\alpha) { M_1\over \sqrt{N}},
\ee{underbar}
where $q_\N(\beta)$ is the $\beta$-quantile of the standard normal distribution,
and $\gamma>0$ is given by the relation
$$
\bE_{\zeta\sim \cN(0,1)}\left\{\exp\{\gamma^2\zeta^2\}\right\}=\exp\{1\},
$$
or, equivalently,
$
\gamma^2=\half(1-\exp\{-2\}).
$
\end{proposition}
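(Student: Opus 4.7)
The plan is to invoke a standard two-point statistical lower bound. I construct a one-parameter family of linear stochastic problems for which estimating $\Opt$ reduces to estimating the mean of a univariate Gaussian, and then use the Neyman--Pearson lemma to translate a lower bound on the optimal testing error into a lower bound on the width of any valid confidence interval.

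Concretely, fix $n\geq 1$ and consider the family indexed by $\theta\geq 0$: let $a(\theta)=\theta e_1\in\bR^n$ and take $F(x,\xi)=\langle\xi,x\rangle$ with $\xi=a(\theta)+\sigma\zeta e_1$, where $\zeta\sim\cN(0,1)$ and $\sigma>0$ is to be chosen. Then $f_\theta(x)=\theta x_1$ and, since $X=B_2$, $\Opt(\theta)=-\theta$ is linear in $\theta$. A direct computation yields $F(x,\xi)-f_\theta(x)=\sigma\zeta x_1$ and $L(x,\xi)=\sigma|\zeta|$. Invoking the identity $\bE\{\exp(\gamma^2\zeta^2)\}=e$ for $\zeta\sim\cN(0,1)$, one checks that (\ref{bounds}.a) reduces to $\sigma^2 x_1^2\leq\gamma^2 M_1^2$ (worst case $x_1=\pm1$ on $B_2$) and (\ref{bounds}.b) reduces to $\sigma^2\leq\gamma^2 M_2^2$, so that both hold as soon as $\sigma\leq\gamma\min(M_1,M_2)$; under the hypothesis $M_1\geq M_2$ the sharpest choice in the borderline regime $M_1=M_2$ is $\sigma=\gamma M_1$, which is what delivers the $M_1$-dependence of $\underbar W$.

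Now any CI $[L(\xi^N),U(\xi^N)]$ of level $\geq 1-\alpha$ satisfies $\Prob_\theta(-\theta\in[L,U])\geq 1-\alpha$ for every $\theta$ in the family. Pick two values $\theta_0<\theta_1$ and set $D=\theta_1-\theta_0$. If, contrary to the claim, the width $W=U-L$ satisfied $W<D$ on every realization of $\xi^N$, then $[L,U]$ could not simultaneously cover $-\theta_0$ and $-\theta_1$, so the test $\phi(\xi^N)$ that accepts $H_j:\theta=\theta_j$ iff $-\theta_j\in[L,U]$ is well-defined and, by the coverage property, has both error probabilities at most $\alpha$.

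Since $\theta$ enters the samples only through $\xi_i^{(1)}\sim\cN(\theta,\sigma^2)$, the sample mean $\bar\xi_N\sim\cN(\theta,\sigma^2/N)$ is sufficient, so the Neyman--Pearson lemma implies that any test of $\cN(\theta_0,\sigma^2/N)$ versus $\cN(\theta_1,\sigma^2/N)$ with both error probabilities $\leq\alpha$ must satisfy $\sqrt{N}D/(2\sigma)\geq q_\cN(1-\alpha)$, i.e.\ $D\geq 2q_\cN(1-\alpha)\sigma/\sqrt{N}$; letting $D$ approach this value from below contradicts the assumption $W<D$, and substituting $\sigma=\gamma M_1$ yields the announced bound $\underbar W$. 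The main obstacle is to pin down what ``width cannot be less than $\underbar W$'' means precisely: the argument cleanly establishes that no deterministic envelope $W_{\max}<\underbar W$ can dominate $U(\xi^N)-L(\xi^N)$ pointwise in $\xi^N$, and a secondary subtlety is the inherent coupling of (\ref{bounds}.a) and (\ref{bounds}.b) in the linear Euclidean-ball setting, where both conditions reduce to essentially the same constraint on $\sigma$, so that the $M_1$-scaling in $\underbar W$ is tight exactly when $M_1=M_2$.
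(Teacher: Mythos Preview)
Your two-point reduction to Gaussian mean testing is exactly the right idea and matches the paper's strategy. However, your construction has a genuine gap: as you yourself note at the end, with $F(x,\xi)=\langle\xi,x\rangle$ and $\xi=\theta e_1+\sigma\zeta e_1$, conditions (\ref{bounds}.a) and (\ref{bounds}.b) both reduce to $\sigma\leq\gamma M_2$ (the binding one, since $M_2\leq M_1$), so the testing argument delivers only $\underbar W=2\gamma q_\N(1-\alpha)M_2/\sqrt{N}$. The proposition, however, claims the bound with $M_1$ for \emph{every} pair $M_1\geq M_2>0$, not just in the borderline case $M_1=M_2$.

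The paper resolves this by introducing a second, \emph{$x$-independent} Gaussian noise: with $\xi=[\eta;\zeta]\sim\cN(0,I_2)$ it takes $F(x,\xi)=\sigma\eta\,h^Tx+s\zeta$ (and its shifted counterpart $(\delta h+\sigma\eta h)^Tx+(s\zeta-d)$). The additive term $s\zeta$ contributes to $M_1$ but not to $L(x,\xi)$, so one can set $M_2=\sigma/\gamma$ and $M_1=\sqrt{\sigma^2+s^2}/\gamma$ independently. The gap between the two optimal values is now $\delta+d$, and the indistinguishability constraint becomes $\delta^2/\sigma^2+d^2/s^2\leq 4q_\N(1-\alpha)^2/N$; maximizing $\delta+d$ over this ellipse gives exactly $2\gamma M_1 q_\N(1-\alpha)/\sqrt{N}$. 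So the missing ingredient in your argument is this decoupling device: an additive random offset that inflates $M_1$ without touching $M_2$ and simultaneously enlarges the separation of the optimal values.
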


In Table \ref{arik_exp}, we provide the ratios $R_{\rm W}$
of the widths of the confidence intervals, as given by Corollary \ref{vinc} and their lower bounds
for some combinations of risks  $\alpha$ and parameters $M_1, M_2$ and $N$.

\begin{table}
\centering
\begin{tabular}{|c|c|c|c||c|c|c||c|c|c|}
\hline
$\alpha=0.1$&\multicolumn{3}{c||}{$M_1=M_2=1$}&\multicolumn{3}{c||}{$M_1=10, \,M_2=1$}&\multicolumn{3}{c|}{$M_1=100,\,M_2=1$}\\
\hline
$N$&10&100&1000&10&100&1000&10&100&1000\\\hline
$R_{\rm W}$&8.086 & 7.803 & 7.775 &3.772 &3.744 &3.741 &3.341& 3.338& 3.337\\
\hline
\end{tabular}
\\~\\
\begin{tabular}{|c|c|c|c||c|c|c||c|c|c|}
\hline
$\alpha=0.01$&\multicolumn{3}{c||}{$M_1=M_2=1$}&\multicolumn{3}{c||}{$M_1=10, \,M_2=1$}&\multicolumn{3}{c|}{$M_1=100,\,M_2=1$}\\
\hline
$N$&10&100&1000&10&100&1000&10&100&1000\\\hline
$R_{\rm W}$&5.586&5.362  & 5.340 &2.666  & 2.644   & 2.642 & 2.374 & 2.372 & 2.372
\\
\hline
\end{tabular}
\\~\\
\begin{tabular}{|c|c|c|c||c|c|c||c|c|c|}
\hline
$\alpha=0.001$&\multicolumn{3}{c||}{$M_1=M_2=1$}&\multicolumn{3}{c||}{$M_1=10, \,M_2=1$}&
\multicolumn{3}{c|}{$M_1=100,\,M_2=1$}\\
\hline
$N$&10&100&1000&10&100&1000&10&100&1000\\\hline
$R_{\rm W}$&4.908& 4.689& 4.667&2.368 &2.346 & 2.344&2.114&2.112&2.112
\\
\hline
\end{tabular}
\caption{Ratios $R_{\rm W}$ of the widths of the confidence intervals as given by Corollary \ref{vinc} and their lower bounds
from Proposition \ref{pro:lower2arik}.
}
\label{arik_exp}
\end{table}

\end{enumerate}

\subsection{Constrained case} \label{ccase}
Now consider a convex stochastic problem of the form
\begin{equation}\label{CSP}
\Opt=\min_{x\in X}\left[f_0(x):=\int_\Xi F_0(x,\xi)P(d\xi):\;\;f_i(x):=\int_\Xi F_i(x,\xi) P(d\xi)\leq 0,\,1\leq i\leq m\right],
\end{equation}
where, similarly to the above, $X$ is a convex compact set in a Euclidean space $E$, $P$ is a Borel probability distribution on  $\bR^k$, $\Xi$ is the support of $P$, and
$$
F_i(x,\xi):E\times\Xi\to \bR,\, 0\leq i\leq m,
$$
are Borel functions convex in $x$ and $P$-summable in $\xi$ for every $x$, implying that the functions $f_i$, $0\leq i\leq m$, are convex. As in the previous section, we assume that $E$ is equipped with a norm $\|\cdot\|$, the conjugate norm being $\|\cdot\|_*$, and a compatible with $\|\cdot\|$ distance-generating function  for the unit ball $B_{\|\cdot\|}$ of the norm. 

We put
\[
L(x,\xi)=\max\limits_{0\leq i\leq m}\left\{\|g-h\|_{*}: g\in \partial_x F_i(x,\xi),h\in\partial f_i(x)\right\}.
\]
Assuming {that} we can sample from the distribution $P$, and given a sample size $N$, we can build  Sample Average Approximations (SAA's) of functions $f_i$, $0\leq i\leq m$:
$$
f_{i,N}(x,\xi^N)={1\over N} \sum_{t=1}^NF_i(x,\xi_t).
$$
Here, as above, $\xi_1,\xi_2,...$ are drawn, independently of each other from $P$ and $\xi^N=(\xi_1,...,\xi_N)$. Same as above, we want to use these SAA's of the objective and the constraints of (\ref{CSP}) to infer conclusions on the optimal value of the problem of interest (\ref{CSP}).
\par
Our first observation is that {\sl in the constrained case, one can hardly expect a reliable and tight approximation to $\Opt$ to be obtainable from noisy information.} The reason is that {\sl in the general constrained case, even the special one where $F_i$ (and thus $f_i$) are affine in $x$, the optimal value is highly unstable: arbitrarily small perturbations of the data} (e.g., the coefficients of affine functions $F_i$ in the special case
or parameters of distribution $P$) {\sl can result in large changes in the optimal value.} As a result, with noisy observations of the data, one  could hardly expect to get a good estimate of $\Opt$ via a sample of instance-independent size. The standard remedy is to impose
an priori upper bound on the magnitude of optimal Lagrange multipliers for the problem of interest, e.g., by imposing the assumption that this problem is strictly feasible, with the level of strict feasibility\\
 \begin{equation}\label{defbeta}
\varkappa:=-\min_{x\in X}\max[f_1(x),...,f_m(x)]
  \end{equation}
 lower-bounded by a known in advance positive quantity. Since in many cases an priori lower bound on $\varkappa$ is unavailable,  we intend in the sequel to utilize an alternative approach, specifically, as follows. Let us associate with (\ref{CSP}) the univariate (max-)function
$$
\Phi(r)=\min_{x\in X}\max[f_0(x)-r,f_1(x),...,f_m(x)].
$$
Clearly, $\Phi$ is a continuous convex nonincreasing function of $r\in \bR$ such that $\Phi(r)\to\infty$ as $r\to-\infty$. This function has a zero if and only if (\ref{CSP}) is
feasible, and $\Opt$ is nothing but the smallest zero of $\Phi$.
\begin{definition}
Given $\epsilon>0$, a real $\rho$ {\sl  $\epsilon$-underestimates $\Opt$} if $\rho\leq\Opt$ and $\Phi(\rho)\leq\epsilon$.
\end{definition}
Note that  $\Phi(\rho)\leq\epsilon$ implies that
$$
\rho\geq\Opt(\epsilon):=\min_{x\in X}\left[f_0(x)-\epsilon:f_i(x)\leq\epsilon,\,1\leq i\leq m\right].
$$
Thus, $\rho$ $\epsilon$-underestimates $\Opt$ if and only if $\rho$ is in-between the optimal value of the problem of interest (\ref{CSP}) and the problem obtained from (\ref{CSP}) by ``optimistic'' $\epsilon$-perturbation of the objective and the constraints.

\begin{remark}
Let $\rho$ $\epsilon$-underestimate $\Opt$. When (\ref{CSP}) is feasible and the magnitude (absolute value)
$\vartheta$ of the left derivative of $\Phi(\cdot)$ taken at $\Opt$ is positive, from convexity of $\Phi$ it follows that
$$
\Opt-{\epsilon\over \vartheta}\leq\rho<\Opt.
$$
\end{remark}
Thus, unless $\vartheta$ is small, $\rho$ is an $O(\epsilon)$-tight lower bound on $\Opt$. Note that when (\ref{CSP}) is strictly feasible, $\vartheta$ indeed is positive, and it can be bounded away from zero. Indeed, we have the following:
\begin{lemma}\label{vincentlemma}
Let $\vartheta$ be the magnitude of the left derivative of $\Phi$ at $\Opt$ and assume that $\varkappa$ given by  \eqref{defbeta} is positive.
Then
$$
\vartheta \geq {\varkappa\over V+\varkappa},\;\mbox{where}\;V=\max_{x\in X} f_0(x)-\Opt.
$$
\end{lemma}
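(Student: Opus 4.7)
\textbf{Proof plan for Lemma \ref{vincentlemma}.} The idea is to probe the univariate convex function $\Phi$ at a single, carefully chosen point $r^{*}>\Opt$. On one hand, $\Phi(r^{*})$ admits a concrete upper bound via the strictly feasible point witnessing $\varkappa$; on the other hand, $\Phi(r^{*})$ admits a lower bound coming from the subgradient inequality at $\Opt$. Matching the two bounds yields the desired inequality.

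First I would select $\bar{x}\in X$ achieving $\max_{1\leq i\leq m}f_{i}(\bar{x})=-\varkappa$. Such a point exists because $X$ is compact and each convex function $f_{i}$ is continuous on $X$, so the max-function $\max_{i}f_{i}$ attains its minimum on $X$; in the marginal case where continuity fails on $\partial X$, the same argument goes through with $\bar{x}$ replaced by a sequence $\bar{x}_{k}$ satisfying $\max_{i}f_{i}(\bar{x}_{k})\leq -\varkappa+1/k$ and a passage to the limit at the end. Because $\bar{x}$ is feasible for \rfp{CSP}, one has $\Opt\leq f_{0}(\bar{x})$; the definition of $V$ immediately gives $f_{0}(\bar{x})\leq \Opt+V$. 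Now set
\[
r^{*}:=f_{0}(\bar{x})+\varkappa,\qquad\text{so that}\quad \varkappa\leq r^{*}-\Opt\leq V+\varkappa.
\]

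Plugging $\bar{x}$ into the definition of $\Phi(r^{*})$ yields the upper bound
\[
\Phi(r^{*})\leq \max\bigSet{f_{0}(\bar{x})-r^{*},\,f_{1}(\bar{x}),\ldots,f_{m}(\bar{x})}\leq \max\{-\varkappa,-\varkappa\}=-\varkappa.
\]
For the lower bound I would invoke convexity of $\Phi$ on $\bR$: the left derivative $\Phi'_{-}(\Opt)=-\vartheta$ is an element of the subdifferential $\partial\Phi(\Opt)$, so the subgradient inequality combined with $\Phi(\Opt)=0$ gives
\[
\Phi(r^{*})\geq \Phi(\Opt)-\vartheta(r^{*}-\Opt)=-\vartheta(r^{*}-\Opt).
\]
Comparing the two estimates and using $r^{*}-\Opt>0$ produces $\vartheta(r^{*}-\Opt)\geq \varkappa$, i.e.\ $\vartheta\geq \varkappa/(r^{*}-\Opt)\geq \varkappa/(V+\varkappa)$, as claimed. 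There is essentially no hard step here: the argument is a one-point probe, and the only point meriting mild care is the existence of an attaining $\bar{x}$, handled either by compactness of $X$ or by a trivial limiting argument.
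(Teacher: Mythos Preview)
Your argument is correct and in fact cleaner than the paper's own proof. The paper probes $\Phi$ on the \emph{left} of $\Opt$: it picks $\delta>0$, takes a near-minimizer $x_\delta$ of $\max[f_0(\cdot)+\delta,f_1(\cdot),\ldots,f_m(\cdot)]$, forms the convex combination $x=\frac{\Phi(-\delta)}{\varkappa+\Phi(-\delta)}\bar{x}+\frac{\varkappa}{\varkappa+\Phi(-\delta)}x_\delta$, checks feasibility of $x$, and uses $f_0(x)\geq\Opt$ to derive $\Phi(-\delta)/\delta\geq\varkappa/(V+\varkappa)$, then lets $\delta\to 0^{+}$. You instead probe $\Phi$ on the \emph{right} at the single point $r^{*}=f_0(\bar{x})+\varkappa$, use $\bar{x}$ directly as a witness for $\Phi(r^{*})\leq-\varkappa$, and invoke the subgradient inequality at $\Opt$ to get $\Phi(r^{*})\geq-\vartheta(r^{*}-\Opt)$. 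Your route avoids both the auxiliary convex-combination construction and the limiting argument; the paper's route, by working on the left, yields as a byproduct the slightly stronger difference-quotient bound $\big(\Phi(\Opt-\delta)-\Phi(\Opt)\big)/\delta\geq\varkappa/(V+\varkappa)$ for every $\delta>0$, though only the limit is actually claimed in the lemma.
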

In respect to the constrained problem (\ref{CSP}), our main result is as follows:
\begin{proposition}\label{propmain1}
In the just described situation, assume that $f_i$, $0\leq i\leq m$, are differentiable on $X$, and that for some positive $M_1$, $M_2$ one has for $i=0,1,...,m$ and all $x\in X$:
\[
\begin{array}{ll}
\bE\Big[\e^{(F_i(x,\xi)-f_i(x))^2/M_1^2} \Big] \leq \e,\quad &
\bE\Big[\e^{L^2(x,\xi)/M_2^2} \Big] \leq \e.
\end{array}
\]
Assume also that {\rm (\ref{CSP})} is feasible, and that for $N\in \bZ_+$, $s>1$,
and $\lambda,\,\mu\in[0,2\sqrt{\tau_*N}]$, $\epsilon$ and $\beta$ satisfy
\begin{equation}\label{epsilon}
\begin{array}{rcl}
\epsilon&>&2N^{-1/2}\left[\mu M_1+M_2 R \left[{\Omega\over 2}[1+s^2]+\lambda\right]\right],\\
\beta&=&\beta(\mu,s,\lambda,N)=  \e^{-N(s^2-1)}+\e^{-{\lambda^2\over 4\tau_*}} +(m+2)\e^{-{\mu^2\over 4\tau_*}},
\end{array}
\end{equation}
where $\Omega$ is given by {\rm \rf{Omega}}, and $\tau_*$ is given in Proposition \ref{propmain}. Then
the random quantity
\[
\Opt_N(\xi^N)=\min\limits_{x\in X}\Big[
f_{0,N}(x,\xi^N)-\mu M_1 N^{-1/2}:\;
f_{i,N}(x,\xi^N)-\mu M_1 N^{-1/2}
\leq0,\,1\leq i\leq m\Big]
\]
$\epsilon$-underestimates $\Opt$ with probability $\geq1-\beta$.\\
\end{proposition}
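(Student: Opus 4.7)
The goal is to show that $\Opt_N(\xi^N)$ satisfies, with probability at least $1-\beta$, the two defining conditions of $\epsilon$-underestimation: (i) $\Opt_N(\xi^N)\le\Opt$ and (ii) $\Phi(\Opt_N(\xi^N))\le\epsilon$. I prove these separately and union-bound the failure events. Step (i) is a direct extension of the upper-bound estimate \eqref{upper} in Proposition \ref{propmain} from the single objective to the pair (objective, constraints); Step (ii) adapts the single-pivot convexity argument driving the lower bound \eqref{lower} to the stochastically constrained setting.

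\textbf{Step (i).} Let $x_*\in X$ achieve the minimum in \eqref{CSP}, so $f_0(x_*)=\Opt$ and $f_i(x_*)\le 0$ for $1\le i\le m$. For each $i=0,1,\ldots,m$, the tail hypothesis $\bE[\e^{(F_i(x_*,\xi)-f_i(x_*))^2/M_1^2}]\le\e$ drives the same one-sided deviation estimate used to prove \eqref{upper}, giving $\Prob\{f_{i,N}(x_*,\xi^N)-f_i(x_*)>\mu M_1/\sqrt N\}\le\e^{-\mu^2/(4\tau_*)}$. A union bound over the $m+1$ indices yields, with probability at least $1-(m+1)\e^{-\mu^2/(4\tau_*)}$, the inequalities $f_{i,N}(x_*,\xi^N)-\mu M_1/\sqrt N\le f_i(x_*)\le 0$ for $i\ge 1$ (so $x_*$ is feasible for the shifted SAA) together with $f_{0,N}(x_*,\xi^N)-\mu M_1/\sqrt N\le\Opt$; jointly these imply $\Opt_N(\xi^N)\le\Opt$.

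\textbf{Step (ii).} Let $\hat x_N$ be an optimal solution of the shifted SAA, so $f_{0,N}(\hat x_N,\xi^N)-\mu M_1/\sqrt N=\Opt_N$ and $f_{i,N}(\hat x_N,\xi^N)\le\mu M_1/\sqrt N$ for $i\ge 1$. Plugging $x=\hat x_N$ into the definition of $\Phi$ yields
\[
\Phi(\Opt_N)\le\max_{0\le i\le m}[f_i(\hat x_N)-f_{i,N}(\hat x_N,\xi^N)]+\mu M_1/\sqrt N.
\]
The hypothesis \eqref{epsilon} on $\epsilon$ is exactly $\epsilon-\mu M_1/\sqrt N>b(\mu,s,\lambda,N)$, so it remains to bound $\max_i[f_i-f_{i,N}](\hat x_N)$ by $b(\mu,s,\lambda,N)$ with sufficient probability. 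I would do so by adapting the single-pivot convexity argument underlying \eqref{lower}: using the saddle-point identity $\max_i[\cdot]=\max_{\alpha\in\Delta_{m+1}}\sum_i\alpha_i[\cdot]$ and the normalized KKT multipliers $\alpha^*\in\Delta_{m+1}$ of \eqref{CSP} (for which $x_*$ is an $X$-minimizer of $g_{\alpha^*}(x)=\sum_i\alpha_i^*f_i(x)$ and hence $\langle\nabla g_{\alpha^*}(x_*),x-x_*\rangle\ge 0$ for $x\in X$), the bound at the random point $\hat x_N$ reduces to two pointwise concentrations at $x_*$: the function-value gap $g_{\alpha^*}(x_*)-g_{\alpha^*,N}(x_*,\xi^N)$ (contributing one extra $\e^{-\mu^2/(4\tau_*)}$) and the dual-norm gradient gap $\|\nabla g_{\alpha^*}(x_*)-\nabla g_{\alpha^*,N}(x_*,\xi^N)\|_*\cdot 2R$ (channeled through $\omega$ exactly as in Proposition \ref{propmain}, contributing a single $\e^{-N(s^2-1)}+\e^{-\lambda^2/(4\tau_*)}$). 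Crucially, the combined quantity $L(x,\xi)=\max_i\|\cdot\|_*$ envelops all $m+1$ gradient discrepancies simultaneously, so the gradient concentration is executed once rather than $m+1$ times; and stability of the tail constants $M_1,M_2$ under convex combinations---via $\sum_i\alpha_i^*=1$, Jensen's inequality, and $(\sum_i\alpha_ia_i)^2\le\sum_i\alpha_ia_i^2$---ensures they transfer to the aggregate $g_{\alpha^*}$ intact. Adding the failure events with those from Step (i) recovers the announced $\beta$.

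\textbf{Main obstacle.} The delicate point is that the single-pivot reduction naturally bounds the KKT-aggregated discrepancy $\sum_i\alpha_i^*[f_i-f_{i,N}](\hat x_N)$, whereas what is actually required is a bound on the \emph{maximum} $\max_i[f_i-f_{i,N}](\hat x_N)$. Bridging this gap through the saddle-point identity $\max_i=\max_{\alpha\in\Delta_{m+1}}\sum_i\alpha_i$ without incurring an $m$- or multiplier-dependent factor is exactly what forces the normalization onto the simplex $\Delta_{m+1}$ (rather than using the raw cone of Lagrange multipliers), since only then do the tail constants $M_1,M_2$ survive the aggregation unscathed; a cruder argument would inflate the bound by $1+\sum_{i\ge 1}\nu_i^*$ and thereby destroy the dimension- and multiplier-free structure of $b(\mu,s,\lambda,N)$. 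Once this structural observation is secured, the sub-Gaussian concentration machinery from the proof of Proposition \ref{propmain} transfers mechanically.
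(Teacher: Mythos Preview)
Your Step (i) is correct and matches the paper's argument for the event $\{\Opt_N\le\Opt\}$.

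Step (ii), however, contains a genuine gap that cannot be closed along the lines you propose. You want to bound $\max_{0\le i\le m}[f_i(\hat x_N)-f_{i,N}(\hat x_N,\xi^N)]$ from \emph{above} by $b(\mu,s,\lambda,N)$. Your plan is to pass to the KKT-aggregate $g_{\alpha^*}=\sum_i\alpha_i^*f_i$ and to run the single-pivot convexity argument at $x_*$. Two independent obstructions arise. First, for any fixed $\alpha\in\Delta_{m+1}$ one has $\sum_i\alpha_i[f_i-f_{i,N}](\hat x_N)\le\max_i[f_i-f_{i,N}](\hat x_N)$, so aggregating with the deterministic weights $\alpha^*$ gives a \emph{lower} bound on the quantity you must upper-bound; the identity $\max_i=\max_{\alpha\in\Delta_{m+1}}\sum_i\alpha_i$ does not help unless you can control the aggregate \emph{for every} $\alpha$, which reintroduces the $m$-fold cost you are trying to avoid. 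Second, even for the aggregate itself the single-pivot argument fails: convexity of $g_{\alpha^*,N}$ at $x_*$ yields, for every $x\in X$, only $g_{\alpha^*}(x_*)-g_{\alpha^*,N}(x)\le[\hbox{error at }x_*]$, whereas you need $g_{\alpha^*}(\hat x_N)-g_{\alpha^*,N}(\hat x_N)$ bounded. The slack $g_{\alpha^*}(\hat x_N)-g_{\alpha^*}(x_*)\ge0$ is uncontrolled (it is exactly the suboptimality of the random point $\hat x_N$ for the true aggregated problem), and the inequality goes the wrong way. Your ``Main obstacle'' paragraph flags the first issue but the proposed fix---normalizing onto the simplex---only preserves $M_1,M_2$ under aggregation; it does not reverse either inequality.

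The paper sidesteps both obstructions by never evaluating anything at $\hat x_N$. It introduces the deterministic level $\bar r=\min\{r:\Phi(r)\le\epsilon\}$, so $\Phi(\bar r)=\epsilon$, and observes that $\Phi(\Opt_N)\le\epsilon$ follows from $\Opt_N\ge\bar r$ by monotonicity of $\Phi$. To establish $\Opt_N\ge\bar r$, it applies von Neumann's lemma to the inner maximum in $\Phi(\bar r)$ to obtain simplex weights $y\in\Delta_{m+1}$ with $\epsilon=\min_{x\in X}\ell(x)$, $\ell=y_0(f_0-\bar r)+\sum_{i\ge1}y_if_i$; crucially, these are the saddle weights at the level $\bar r$, not the KKT multipliers at $\Opt$. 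Then Proposition~\ref{propmain} applies to the \emph{deterministic} problem $\min_X\ell$, giving $\min_X\ell_N\ge\epsilon-b(\mu,s,\lambda,N)$ with the claimed probability; since $\ell_N\le\max_i[\cdot]$ pointwise (convex combination $\le$ max, now used in the right direction), this forces $\overline\Phi_N(\bar r)>0$ and hence $\Opt_N\ge\bar r$.
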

\par {\textbf{MinMax Stochastic Optimization.}} The proof of Proposition \ref{propmain1} also yields the following result which is of interest by its own right:
\begin{proposition}\label{corpropmain1} In the notation and under assumptions of Proposition \ref{propmain1}, consider the minimax problem
\begin{equation}\label{MinMax}
\Opt=\min_{x\in X}\max[f_1(x),...,f_m(x)]
\end{equation}
along with its {Sample} Average Approximation
$$
\Opt_N(\xi^N)=\min_{x\in X} \max[f_{1,N}(x,\xi^N),...,f_{m,N}(x,\xi^N)].
$$
Then for every $N\in\bZ_+$, $s>1$ and $\lambda,\mu\in [0,2\sqrt{\tau_*N}]$ one has
\begin{equation}\label{again}
\Prob\left\{\Opt_N(\xi^N)>\Opt+\mu M_1 N^{-1/2}\right\}\leq
m\e^{-{\mu^2\over 4\tau_*}}
\end{equation}
and
\begin{equation}\label{onceagain}
\begin{array}{l}
\Prob\left\{\Opt_N(\xi^N)<\Opt-\left[\mu M_1+2M_2\left[{\Omega\over 2}[1+s^2]+2\lambda\right]\right]N^{-1/2}\right\}\\
~~\leq
\e^{-{\mu^2\over 4\tau_*}}+2\left[\e^{-N(s^2-1)}+\e^{-{\lambda^2\over 4\tau_*}}\right].
\end{array}\end{equation}
\end{proposition}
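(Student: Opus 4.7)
My plan is to follow the architecture of Proposition \ref{propmain} with minor adaptations for the minimax setting. The upper bound (\ref{again}) reduces to a union bound. Pick $x_*\in\argmin_{x\in X}\max_i f_i(x)$, so that $\max_i f_i(x_*)=\Opt$. Since $x_*$ is feasible for the SAA minimax problem,
\[
\Opt_N(\xi^N)-\Opt\ \leq\ \max_i f_{i,N}(x_*,\xi^N)-\max_i f_i(x_*)\ \leq\ \max_{1\leq i\leq m}\Big(f_{i,N}(x_*,\xi^N)-f_i(x_*)\Big).
\]
For each fixed $i$, $f_{i,N}(x_*,\xi^N)-f_i(x_*)$ is the empirical average of $N$ i.i.d.\ centered variables satisfying (\ref{bounds}.a). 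The moment-generating-function argument used for (\ref{upper}) (which fixes the constant $\tau_*$) gives $\Prob\{f_{i,N}(x_*,\xi^N)-f_i(x_*)>\mu M_1/\sqrt N\}\le \e^{-\mu^2/(4\tau_*)}$, and a union bound over $i=1,\ldots,m$ yields (\ref{again}).

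For the lower bound (\ref{onceagain}), the key step is to replace the non-differentiable minimax objective by a suitable convex combination of the $f_i$'s, chosen so that Proposition \ref{propmain}'s single-point argument at $x_*$ applies verbatim. Since each $f_i$ is differentiable on $X$ and $x_*$ minimizes the convex function $\max_i f_i$ over the convex set $X$, subdifferential calculus for the max of convex functions provides nonnegative multipliers $\alpha_1,\ldots,\alpha_m$ with $\sum_i\alpha_i=1$, supported on the active set $\{i:f_i(x_*)=\Opt\}$, such that $h_*:=\sum_i\alpha_i\nabla f_i(x_*)$ lies in the normal cone to $X$ at $x_*$, i.e., $\langle h_*,x-x_*\rangle\ge 0$ for all $x\in X$. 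Define the convex combinations $\bar F(x,\xi):=\sum_i\alpha_i F_i(x,\xi)$, $\bar f(x):=\sum_i\alpha_i f_i(x)$, and $\bar f_N(x,\xi^N):=\sum_i\alpha_i f_{i,N}(x,\xi^N)$. Then $\bar f(x_*)=\Opt$ (by the active-set support of $\alpha$) and $x_*\in\argmin_X\bar f$. The max-vs-average inequality $\max_i f_{i,N}(x,\xi^N)\ge\bar f_N(x,\xi^N)$ yields
\[
\Opt_N(\xi^N)\ \geq\ \min_{x\in X}\bar f_N(x,\xi^N),
\]
so it suffices to lower-bound $\min_X\bar f_N$ by $\Opt$ minus the deviation in (\ref{onceagain}).

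Both sub-Gaussian hypotheses transfer to $\bar F$ at $x_*$ by Jensen's inequality applied to the convex functions $t\mapsto \e^{t^2/M_1^2}$ and $t\mapsto \e^{t^2/M_2^2}$: the function-value bound is
\[
\bE\Big\{\e^{(\bar F(x_*,\xi)-\bar f(x_*))^2/M_1^2}\Big\}\ \leq\ \sum_i\alpha_i\,\bE\Big\{\e^{(F_i(x_*,\xi)-f_i(x_*))^2/M_1^2}\Big\}\ \leq\ \e;
\]
for the subgradient, any selection $g_i\in\partial_xF_i(x_*,\xi)$ gives $\bar g:=\sum_i\alpha_i g_i\in\partial_x\bar F(x_*,\xi)$ with $\|\bar g-h_*\|_*\le\sum_i\alpha_i\|g_i-\nabla f_i(x_*)\|_*\le L(x_*,\xi)$, whence the analogous bound with parameter $M_2$ holds. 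Since the proof of (\ref{lower}) in Proposition \ref{propmain} uses first-order information only at the single point $x_*\in\argmin_X f$ (as emphasized in the Discussion), it applies verbatim to $\bar f$ and produces the required lower bound on $\min_X\bar f_N$.

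The main technical point -- not conceptual but bookkeeping-heavy -- is to track the exact constants and failure-probability factors that distinguish (\ref{onceagain}) from (\ref{lower}): the doubled $2\lambda$ in the deviation and the doubled tail terms $2[\e^{-N(s^2-1)}+\e^{-\lambda^2/(4\tau_*)}]$ in the failure probability hint that the concentration of $\bar g-h_*$ (the vector-valued average of perturbed subgradients at $x_*$), which in Proposition \ref{propmain} is handled through a truncation parametrized by $s$ together with a Mirror-Descent--type tail bound parametrized by $\lambda$, must here be combined with a two-sided control, whereas the scalar function-value concentration at $x_*$ is still one-sided and contributes a single $\e^{-\mu^2/(4\tau_*)}$. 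Verifying that the Mirror-Descent/distance-generating-function tail inequality goes through for the combined integrand $\bar F$ with precisely the claimed constants is the hardest part of the execution.
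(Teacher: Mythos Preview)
Your approach is correct and is essentially the route the paper has in mind: the paper does not give a separate proof but states that the result is ``yielded by the proof of Proposition~\ref{propmain1},'' whose two ingredients are exactly yours---a union bound at $x_*$ for the upper deviation, and von~Neumann/optimality multipliers to pass to a convex combination $\bar F=\sum_i\alpha_iF_i$, followed by the single-point argument of Proposition~\ref{propmain}. Your verification that the sub-Gaussian hypotheses transfer to $\bar F$ at $x_*$ via Jensen (for the value) and via $\|\sum_i\alpha_i(g_i-\nabla f_i(x_*))\|_*\le L(x_*,\xi)$ (for the subgradient) is exactly what is needed, and your observation that the proof of (\ref{lower}) uses the hypotheses only at the minimizer is the key point.

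Your puzzlement over the constants is justified, not a gap in your argument. Running your reduction through Proposition~\ref{propmain} verbatim gives the deviation $\mu M_1+M_2R[\Omega(1+s^2)+2\lambda]$ with failure probability $\e^{-\mu^2/(4\tau_*)}+\e^{-N(s^2-1)}+\e^{-\lambda^2/(4\tau_*)}$; this is \emph{tighter} in the probability than the stated (\ref{onceagain}), and the stated bound also appears to have dropped the factor $R$. The doubled terms $2\lambda$ and $2[\e^{-N(s^2-1)}+\e^{-\lambda^2/(4\tau_*)}]$ in (\ref{onceagain}) are not forced by your argument; they seem to be an artifact of extracting the minimax statement from the more elaborate two-event machinery of Proposition~\ref{propmain1} rather than going directly, as you do. You need not reproduce those constants---your bound implies (\ref{onceagain}) (up to the missing $R$, which looks like a typo in the paper).
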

An attractive feature of  bounds \rf{again} and \rf{onceagain} is that they are only weakly affected by the number $m$ of components in the minimax problem (\ref{MinMax}).
\section{Numerical experiments}\label{sec:num}
The goal of the experiments of this section is to illustrate numerically the ideas developed above.

\subsection{Confidence intervals for problems without stochastic constraints}\label{sec:conf_ass}
Here we consider three risk{-averse} optimization problems of the form \eqref{eq:opt1} and
we compare the properties of three confidence intervals for $\Opt$ computed for the confidence level $1-\alpha=0.9$:
\begin{enumerate}
\item the asymptotic confidence interval
\begin{equation}\label{confintervalas}
{\cal C}_a(\alpha) =\left[ \widehat{f}_{2N}  -q_{\cN}\left(1-\frac{\alpha}{2}\right){\widehat{\sigma}_{2N}\over \sqrt{{N}}},\,
\widehat{f}_{2N}   +q_{\cN}\left(1-\frac{\alpha}{2}\right){\widehat{\sigma}_{2N}\over \sqrt{N}}\right].
\end{equation}

 Here $\widehat{f}_{2N}$ and $\widehat{\sigma}_{2N}^2$ are estimations of expectation $f(x(\xi^N))$ of $F(x(\xi^N),\xi')$ and of its variance, taken over the distribution of  independent from $\xi^N$ 
 random vector $\xi'$ (here $x(\xi^N)$ is the SAA \eqref{SAAdef} optimal solution  built using the $N$-sample $\xi^N$). They are computed using a second sample $\bar{\xi}^N$ of $\xi$ of size $N$ independent of $\xi^N$:
\be
{\widehat{f}_{2N}}={1\over N}\sum_{t=1}^N F(x(\xi^{N}),\bar{\xi}_t),\;\;\; \widehat{\sigma}_{2N}^2={1\over N}\sum_{t=1}^N F(x(\xi^N),\bar{\xi}_t)^2-\widehat{f}^2_{2N}
\ee{Fsig} (for a justification, see \cite{shapiro2014}).
\item
The (non-asymptotic) confidence interval ${\cal C}_{\rm SMD}(\alpha)$ is built using the {{\em{offline}}} accuracy certificates for the Stochastic Mirror Descent algorithm, cf. Section 3.2 and 
Theorem 2 of  \cite{lan2012validation}.
The non-Euclidean algorithm with entropy distance-generating function provided the best results in these experiments and was used for comparison.
\item
The (non-asymptotic) confidence interval, denoted ${\cal C}_{\rm SAA}(\alpha)$, is based on the bounds of Proposition \ref{propmain}. Specifically, we use
the lower  $1-\alpha/2$-confidence bound ${\tt{Low}}^{\tt{SAA}}$ of Corollary \ref{vinc}.
To construct the upper bound we proceed as follows: first we compute the optimal solution $x(\xi^{N})$ of the SAA using a simulation sample $\xi^N$ of size $N$; then we compute an estimation 
$\widehat{f}_{2N}$ of the objective value
 using the independent sample $\bar{\xi}^N$  as in \rf{Fsig}. Finally, we build the upper confidence bound
\[{\tt Up}'=\widehat{f}_{2N}+{2M_1\sqrt{\tau^*\ln [4\alpha^{-1}]\over N}},\]
where $\tau^*$ and $M_1$ are as in Proposition \ref{propmain} (cf. the bound  \rf{upper}). Finally, the upper bound
 $\overline{\tt Up}^{\tt SAA}$ computed as the minimum of ${\tt Up}'$ and the upper bound ${\tt Up}^{\tt SAA}$ by 
 Corollary \ref{vinc}, tuned for the confidence level $1-\alpha/4$, was used.\footnote{It is worth to mention that in our experiments the upper bound ${\tt Up}^{\tt SAA}$ was too conservative and was systematically ``outperformed'' by the upper bound ${\tt Up}'$.}
\end{enumerate}
For the sake of completeness, for the three optimization problems considered in this section we provide the detail of computing of the constants involved in Appendix \ref{B-appendix}.
SAA formulations of these problems were solved numerically using Mosek Optimization Toolbox \cite{mosek}.

\subsubsection{Quadratic risk minimization} \label{stoquadopt}

Consider the following instance of problem \eqref{eq:opt1}: let $X$ be the standard simplex in $\bR^n$: $X=\{x\in\bR^{n}:\; x_i\geq0,\sum_{i=1}^{n} x_i=1\}$, $\Xi$  is a part of the unit box
$\{\xi=[\xi_1;...;\xi_n]\in\bR^n:\|\xi\|_\infty\leq 1\}$,
\[
F(x,\xi)=\kappa_0 \xi^T x  + {\kappa_1 \over 2} \left( {\xi^T x} \right)^2,\;\;
f(x)=\kappa_0 \mu^Tx +{\kappa_1 \over 2} x^TVx,
\]
with $\kappa_1  \geq 0$ and $\mu=\bE\{\xi\}$,  $V=\bE\{\xi\xi^T\}$.

In our experiments, $\kappa_0 =0.1,\, \kappa_1 =0.9$, and
$\xi$ has independent Bernoulli entries: $\Prob(\xi_i=1)=\theta_i, \;\Prob(\xi_i=-1)=1-\theta_i$, with $\theta_i$ drawn uniformly over $[0,1]$.
This implies that
\[
\mu_i=2\theta_i - 1,\;\;V_{i, j}=\left\{
\begin{array}{ll}\bE\{\xi_i \}\bE\{\xi_j \}=(2\theta_i - 1)(2\theta_j - 1)\;&\mbox{for $i \neq j$},\\
\bE\{ \xi_i^2 \}=1\;&\mbox{for $i=j$.}\end{array}\right.
 \]
{
For several problem and sample sizes, we present in Table \ref{covprobaex1} the  empirical ``coverage probabilities'' of the ``asymptotic'' confidence interval ${\cal C}_a(\alpha)$ 
(i.e., the ratio of realizations for which ${\cal C}_a(\alpha)$ covers the true optimal value) for $\alpha=0.1$ and ``target
coverage probability'' $1-\alpha =0.9$, computed over 500 realizations (the coverage probabilities of the two non-asymptotic confidence intervals are equal to one for all parameter combinations).
We observe that empirical coverage probabilities degrade when the problem size $n$ increases
(and, as expected, they tend to increase with the sample size).
For instance,  these probabilities
are much smaller than the target level, unless the size $N$ of the simulation sample is much larger than problem dimension $n$.
\begin{table}
\centering
\begin{tabular}{|c||c|c|c|c|}
\hline
Sample& \multicolumn{4}{c|}{Problem size $n$}\\
\cline{2-5}
size $N$ &2&10&20&100\\
\hline
$20$&0.94& 0.68 &  0.59 &0.10 \\
\hline
$100$& 0.95& 0.87& 0.70& 0.46      \\
\hline
$10\,000$ & 0.94 & 0.95 & 0.91 & 0.85       \\
\hline
\end{tabular}
\caption{Quadratic risk minimization. Estimated coverage probabilities of the asymptotic confidence intervals ${\cal C}_a(0.1)$.}
\label{covprobaex1}
\end{table}
On the other hand, not surprisingly, the non-asymptotic bounds yield confidence intervals much larger than the asymptotic confidence interval. We report in Table \ref{ratioex1table1} the mean ratio of the widths of
non-asymptotic -- ${\cal C}_{\rm SAA}(\alpha)$ and ${\cal C}_{\rm SMD}(\alpha)$ -- and asymptotic confidence intervals ${\cal C}_{a}(\alpha)$.\footnote{Note that asymptotic estimation $\widehat{\sigma}_N$ of the noise variance often degenerates. 
To avoid division by zero problems, we only kept the realizations where asymptotic confidence intervals cover the true optimal value.} These ratios increase significantly with problem size (in part because the
asymptotic interval becomes indeed too short), and we observe that the confidence interval ${\cal C}_{\rm SAA}(\alpha)$ based on
Sample Average Approximation remains much smaller than the interval  ${\cal C}_{\rm SMD}(\alpha)$ yielded by Stochastic Approximation.
\begin{table}
\centering
\begin{tabular}{|c||c|c|c|c|c||c|c|c|c|c|}
\hline
Sample& \multicolumn{5}{c||}{${|{\cal C}_{\rm SAA}(\alpha)|\over |{\cal C}_{a}(\alpha)|}$, problem size $n$ }& \multicolumn{5}{c|}{${|{\cal C}_{\rm SMD}(\alpha)|\over |{\cal C}_{a}(\alpha)|}$, problem size $n$ }\\
\cline{2-6}\cline{7-11}
size $N$ &2&10&20&100&200&2&10&20&100&200\\
\hline
100 & 6.37 & 9.18  & 10.18 & 29.50&47.43 &30.57  &   65.87 & 78.5   & 274.63  &474.68\\
\hline
1000 &  3.27 &  4.33&   4.52&13.92  &22.46 & 15.52   & 32.56   & 36.98   & 134.67   & 232.32 \\
\hline
10\,000&  3.15 & 4.37&  4.40&  13.44&  21.96& 15.46  & 32.40  & 35.87  & 131.70 &227.56\\
\hline
\end{tabular}
\caption{Quadratic risk minimization. Average ratio of the widths of the
non-asymptotic and asymptotic confidence intervals.}
\label{ratioex1table1}
\end{table}

\corri{Finally, on Figure \ref{vfig1} we compare average  over 100 problem realizations ``inaccuracies'' of approximate solutions delivered by SAA and SMD for ``typical'' problem instances of size $n=100$  
for two combinations of parameters $\kappa_0$ and 
$\kappa_1 $.
\begin{figure}

\begin{tabular}{cc}
\includegraphics[scale=0.55]{LogLog_Sto_Quad_1.pdf}&\includegraphics[scale=0.55]{LogLog_Sto_Quad_3_A101.pdf}
\end{tabular}
\caption{Quadratic risk minimization:  empirical estimation of $\mathbb{E}\{f(x^N)\} -\Opt$ over $100$ realizations as a function of $N$ with approximate solution $x^N$ obtained using either
SAA or SMD (logarithmic scale). Left plot: simulation results for a problem with $\kappa_0 =0.1, \kappa_1 =0.9$; right plot: results for a problem with $\kappa_0 =0.9, \kappa_1 =0.1$.}
\label{vfig1}
\end{figure}
}{}

\subsubsection{Gaussian VaR optimization}\label{portfolioproblem}

We consider the instance of problem \eqref{eq:opt1} where $X\subset \bR^n$ is the standard simplex,
$\xi$ has normal distribution $\cN(0,\Sigma )$ on $\bR^n$ with $\Sigma_{i, i}\leq \smax$,
and $F(x,\xi)=\kappa_0 \xi^ Tx + \kappa_1 | \xi^T x |$, with $\kappa_1 \geq0$, so that
 $f(x)=\kappa_1  \sqrt{2/\pi} \sqrt{x^T \Sigma x}$. 
 Observe that in the present situation, minimizing $f(x)$ is equivalent to maximizing the 
 $\varepsilon$-quantile of the distribution of $\xi^Tx$ (Value-at-Risk ${\rm VaR}(\varepsilon)$) with $\varepsilon=1-\Psi\left(\kappa_1  \sqrt{2/\pi}\right)$ 
 where $\Psi(\cdot)$ is the standard normal CDF.

We generated instances of the problem of different sizes with $\kappa_0 =0.9$, $\kappa_1  = 0.1$, and diagonal matrix $\Sigma$ with diagonal entries drawn uniformly
over $[1,6]$ ($\smax = \sqrt{6}$).

We reproduce the experiments of the previous section in this setting, namely, for several problem and sample sizes, we compute empirical ``coverage probabilities'' of
the confidence intervals over 500 realizations. We report the results for the ``asymptotic'' confidence interval ${\cal C}_a(\alpha)$ in Table \ref{covprobaex2} for ``target
coverage probability'' $1-\alpha=0.9$ (same as above, coverage probabilities of non-asymptotic intervals are equal to one for all parameter combinations).
We especially observe extremely low coverage probabilities for $n=100$ and $N=20$ or $N=100$.
\begin{table}
\centering
\begin{tabular}{|c||c|c|c|c|}
\hline
Sample& \multicolumn{4}{c|}{Problem size $n$}\\
\cline{2-5}
size $N$ &2&10&20&100\\
\hline
$20$& 0.95&  0.73& 0.53   & 0.05\\
\hline
$100$& 0.9 & 0.78  & 0.48  & 0.006    \\
\hline
$10\,000$ & 0.92     & 0.91 &  0.92 & 0.68   \\
\hline
$100\,000$ & 0.94 & 0.92 & 0.92 & 0.92 \\
\hline
\end{tabular}
\caption{Gaussian VaR optimization. Estimated coverage probabilities of asymptotic confidence intervals.}
\label{covprobaex2}
\end{table}

In Table \ref{ratioex2table1} the average ratios of the widths of
non-asymptotic and asymptotic confidence intervals are provided for the same experiment.
{Same as in the experiments described in the  previous section, these ratios increase with problem size, and the confidence intervals by SMD are much more conservative than those by SAA.}

\begin{table}
\centering
\begin{tabular}{|c||c|c|c|c|c||c|c|c|c|c|}
\hline
Sample& \multicolumn{5}{c||}{${|{\cal C}_{\rm SAA}(\alpha)|\over |{\cal C}_{a}(\alpha)|}$ for problem size $n$}&
\multicolumn{5}{c|}{${|{\cal C}_{\rm SMD}(\alpha)|\over |{\cal C}_{a}(\alpha)|}$ for problem size $n$}\\
\cline{2-6}\cline{7-11}
size $N$ &2&10&20&100&200&2&10&20&100&200\\
\hline
20 & 4.42    & 6.15 & 6.11 & 6.27 & 6.35   &40.16   & 112.38  & 133.80  &183.61  & 205.66  \\
\hline
100 & 5.04    & 9.11& 10.79 & 12.87&  13.44 &  46.41 & 172.00  &244.68   & 397.01&  458.85 \\
\hline
10\,000& 5.27 &12.17  & 16.29  & 26.65 & 30.28 & 49.15 &237.79 & 386.31  &974.32 & 1088.90\\
\hline
\end{tabular}
\caption{Gaussian VaR optimization. Average ratio of the widths of the
non-asymptotic and asymptotic confidence intervals.}
\label{ratioex2table1}
\end{table}
\corri{
On Figure \ref{vfig2} we present average over 100 problem realizations ``inaccuracies'' of approximate solutions delivered by SAA and SMD for ``typical'' problem instances of size $n=100$.
\begin{figure}
\centering
\begin{tabular}{cc}
 \includegraphics[scale=0.55]{Loglog_Markowitz_A1_09.pdf}
 & \includegraphics[scale=0.55]{Loglog_Markowitz_A1_01.pdf}
 \end{tabular}
\caption{Gaussian VaR optimization: empirical estimation of $\mathbb{E}\{f(x^N) -\Opt\}$ as a function of $N$ (in logarithmic scale).
Left plot: simulation results for a problem with $\kappa_0 =0.1, \kappa_1 =0.9$; right plot: results for a problem with $\kappa_0 =0.9, \kappa_1 =0.1$.}
\label{vfig2}
\end{figure}
}{}
\subsubsection{CVaR optimization} \label{portboundedreturns}

We consider here the following CVaR optimization problem:
given $\varepsilon>0$, find
 \be
 \begin{array}{rl}
\Opt_\varepsilon=\min_{x'}& \kappa_0 \bE\{\xi^Tx'\}+\kappa_1 {\rm CVaR}_\varepsilon(\xi^Tx') \\
&x'\in \bR^n\;\sum_{i=1}^n x'_i=1,\; x'\ge 0,
\end{array}
 \ee{cvar}
 where the support $\Xi$ of $\xi$ is a part of the unit box
$\{\xi=[\xi_1;...;\xi_n]\in\bR^n:\|\xi\|_\infty\leq 1\}$,
 and where
 \[
 {\rm CVaR}_\varepsilon( \xi^Tx')=\min_{x_0 \in \bR}\, \{x_0 + \bE\{\varepsilon^{-1}[\xi^Tx'-x_0]_+\} \}
 \]
is the Conditional Value-at-Risk of level $0<\varepsilon<1$, see \cite{ury2}.
Observing that $|\xi^Tx'| \leq 1$ a.s., the above problem is clearly of the form \eqref{eq:opt1} with
$X=\{x=[x_0;x'_1;...;x'_n]\in\bR^{n+1}: \;|x_0|\leq 1,\, x'_1,...,x'_{n}\geq0,\,\sum_{i=1}^{n} x'_i=1\}$
and
\[
F(x,\xi)=\kappa_0 \xi^Tx' + \kappa_1 \left(x_0+{1\over\epsilon}[\xi^Tx'-x_0]_+\right).
\]
We consider random instances of the problem with $\kappa_0,\kappa_1 \in[0,1]$, and
$\xi$  with independent Bernoulli entries: $\Prob(\xi_i=1)=\theta_i, \;\Prob(\xi_i=-1)=1-\theta_i$, with $\theta_i$, $i=1,...,n$ drawn uniformly from $[0,1]$.

 We compare the non-asymptotic confidence interval $\C_{\rm SAA}(\alpha)$ for $\Opt_\varepsilon$ to the asymptotic confidence interval $\C_a(\alpha)$ with confidence level $1-\alpha=0.9$. We consider two sets
 of problem parameters: $(\kappa_0, \kappa_1 , \varepsilon)=(0.9, 0.1, 0.9)$ and the risk-averse variant
$(\kappa_0, \kappa_1 , \varepsilon)=(0.1, 0.9, 0.1)$.
The empirical coverage probabilities
for the asymptotic confidence interval are reported in Table \ref{covprobaex3}.
As in other experiments, the coverage probability  is still below the target probability $1-\alpha=0.9$ when
the sample size is not much larger than the problem size. For SAA, the coverage probabilities
are equal to one for all parameter combinations.
\begin{table}
\centering
\begin{tabular}{|c||c|c|c|c||c|c|c|c|}
\hline
Sample& \multicolumn{4}{c||}{$\varepsilon=0.1$, problem size $n+1$}&\multicolumn{4}{c|}{$\varepsilon=0.9$, problem size $n+1$}\\
\cline{2-5}\cline{6-9}
size $N$ &3&11&21&101&3&11&21&101\\
\hline
$100$& 0.96 & 0.74  & 0.85   & 0.78&  0.96   &0.95 &0.95&0.78\\
\hline
$1000$&   0.95  & 0.88  & 0.86 &  0.67& 0.95 & 0.92 & 0.84& 0.84\\
\hline
$10\,000$ & 0.92     & 0.93 &  0.91& 0.94 &0.92 & 0.95 & 0.96 & 0.96  \\
\hline
\end{tabular}
\caption{CVaR optimization. Estimated coverage probabilities of asymptotic confidence intervals.}
\label{covprobaex3}
\end{table}

We report in Table \ref{ratioex3table1} the average ratio of the widths of
non-asymptotic and asymptotic confidence intervals. Note that the Lipschitz constant of $F(\cdot,\xi)$ is proportional to $1/\varepsilon$ when $\varepsilon$ is small. This explains the fact that for small values of $\epsilon$, the ratio
of the widths of the proposed non-asymptotic and asymptotic confidence intervals grows up significantly, especially for problem size $n+1=3$.

The experiments of this section show that when the sample size is not much larger than the problem dimension, the asymptotic computations fail
to provide the confidence set of the prescribed risk. In such case the proposed approach, though conservative, seems to be the only option available
for constructing a reliable confidence interval.

\begin{table}
\centering
\begin{tabular}{|c||c|c|cc|c||c|c|c|c|c|}
\hline
Sample& \multicolumn{5}{c||}{$\varepsilon=0.9$,  problem size $n+1$}&
\multicolumn{5}{c|}{$\varepsilon=0.1$,  problem size $n+1$}\\
\cline{2-11}
size $N$ &3&11&21&101&201&3&11&21&101&201\\
\hline
100 &3.09    & 3.69& 7.33 & 14.25& 13.79 &293.47    &27.61 &  9.14&14.32 &  14.44\\
\hline
1000&3.25   &3.67 & 8.63 &35.04 & 36.72&294.16   &27.04 &8.72  & 34.43  & 37.42 \\
\hline
10\,000& 3.22& 3.68&8.61  &32.08 &34.00 & 293.92  &26.91 & 8.66 & 31.70&34.18\\
\hline
\end{tabular}
\caption{CVaR optimization. Average ratio ${|{\cal C}_{\rm SAA}(\alpha)|\over |{\cal C}_{a}(\alpha)|}$ of the widths of the
non-asymptotic and asymptotic confidence intervals.}
\label{ratioex3table1}
\end{table}
\corri{
\begin{figure}
\centering
\begin{tabular}{c}
 \includegraphics[scale=0.6]{Loglog_CVaR_A0_09.pdf}
\end{tabular}
\caption{CVaR \corri{minimization}{optimization}: empirical estimation of $\mathbb{E}\{f(x^N) -\Opt\}$ as a function of $N$ (in logarithmic scale) on a typical problem instance with $\kappa_0=0.9, \kappa_1 =0.1$, and $\varepsilon=0.1$}
\label{vfig3}
\end{figure}
}{}

\subsection{Lower bounding the optimal value of a minimax problem}
We illustrate here the application of Proposition \ref{corpropmain1} to lower bounding the optimal value of the MinMax problem \rf{MinMax}. To this end we consider the toy problem
\be
\Opt= \min_{x}\max\left[f_i(x), \;i=1,...,3, \;x=[u;v],\;v\in \bR,\,u\in \bR^n, \,\sum_{i=1}^n u_i=1,\; u\ge 0\right],
\ee{minmax-opt}
where
\[
f_1(x)=v+\bE\{\varepsilon^{-1}[\xi^Tu-v]_+\}+\chi_1,\;\; f_2(x)= \bE\{\xi^Tu\}+\chi_2,\;\;f_3(x)= \chi_3-\bE\{\xi^Tu\},
\]
$\varepsilon$ and $\chi$ being some given parameters. The SAA of the problem reads
\be
\Opt(\xi^N)= \min_{x}\max\left[f_{i,N}(x, \xi^N), \;i=1,...,3, \;x=[u;v]\;v\in \bR,\,u\in \bR^n, \,\sum_{i=1}^n u_i=1,\; u\ge 0\right],
\ee{saa211}
with
\bse
f_{1,N}(x,\xi^N)&=&v+{1\over N\varepsilon}\sum_{t=1}^N[\xi_t^Tu-v]_++\chi_1,\\
f_{2,N}(x,\xi^N)&=& {1\over N}\sum_{t=1}^N\xi_t^Tu+\chi_2,\;\;f_{3,N}(x, \xi^N)= \chi_3-{1\over N}\sum_{t=1}^N\xi_t^Tu.
\ese
One can try to build an ``asymptotic'' lower bound for $\Opt$ as follows ({note that here we are not concerned with the theoretical validity of this construction}): given the optimal solution $x(\xi^N)$ to 
SAA \rf{saa211} and an independent sample $\bar{\xi}^N$, compute empirical 
estimations $\widehat{f}_{i,2N}$ and $\widehat{\sigma}^2_{i,2N}$ of expectation and variance of $F_i(x(\xi^N),\xi')$, as explained 
in Section \ref{sec:conf_ass}, then compute the lower bound ``of asymptotic risk $\alpha$'' according to
\[
\underline{\Opt}(\xi^N)=\max_{i=1,...,3} \left\{   \widehat{f}_{i,2N}-q_{\cN}\left(1-{\small \alpha\over 3}\right){\widehat{\sigma}_{i,2N}\over \sqrt{N}}\right\}.
\]
On Figure \ref{fig:3} we present the simulation results for the case of $\xi\in \bR^n$ with independent Bernoulli components:
$\Prob(\xi_i=1)=\theta_i, \;\Prob(\xi_i=-1)=1-\theta_i$, with $\theta_i$ randomly drawn over $[0,1]$. Parameters $\chi_i, i=1,2,3$ are chosen in such a way 
\corri{to ensure}{} that 
$f_1$, $f_2$ and $f_3$ are equal at the minimizer of \rf{minmax-opt}. More precisely, the results of $100$ simulations of the problem with $n=2$ and $N=128$ are presented on Figure \ref{fig:3} for the value of CVaR parameter $\varepsilon=0.5$ and $\varepsilon=0.1$. Note that in this case the risk of the lower bound $\underline{\Opt}(\xi^N)$ is significantly larger than the prescribed risk $\varepsilon=0.1$ already for small problem dimension -- the ``asymptotic'' lower bound failed in $33$ of 100
realizations in the experiment with $\varepsilon=0.5$, and in $36$ of $100$ realizations in the experiment with $\varepsilon=0.1$.
\begin{figure}
$$
\begin{array}{cc}
\resizebox{.5\textwidth}{!}{
 \includegraphics{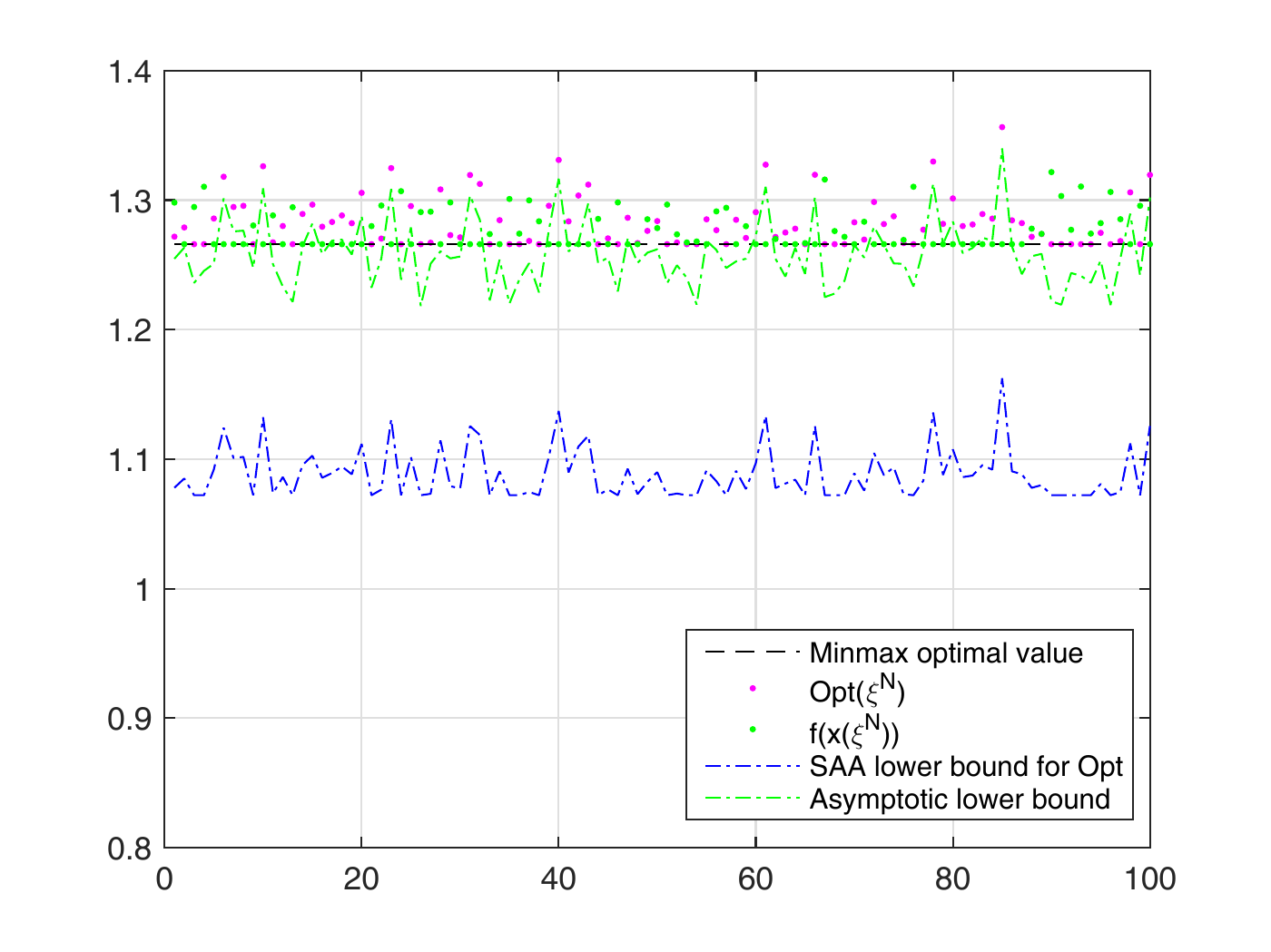}}
&
\resizebox{.5\textwidth}{!}{
 \includegraphics{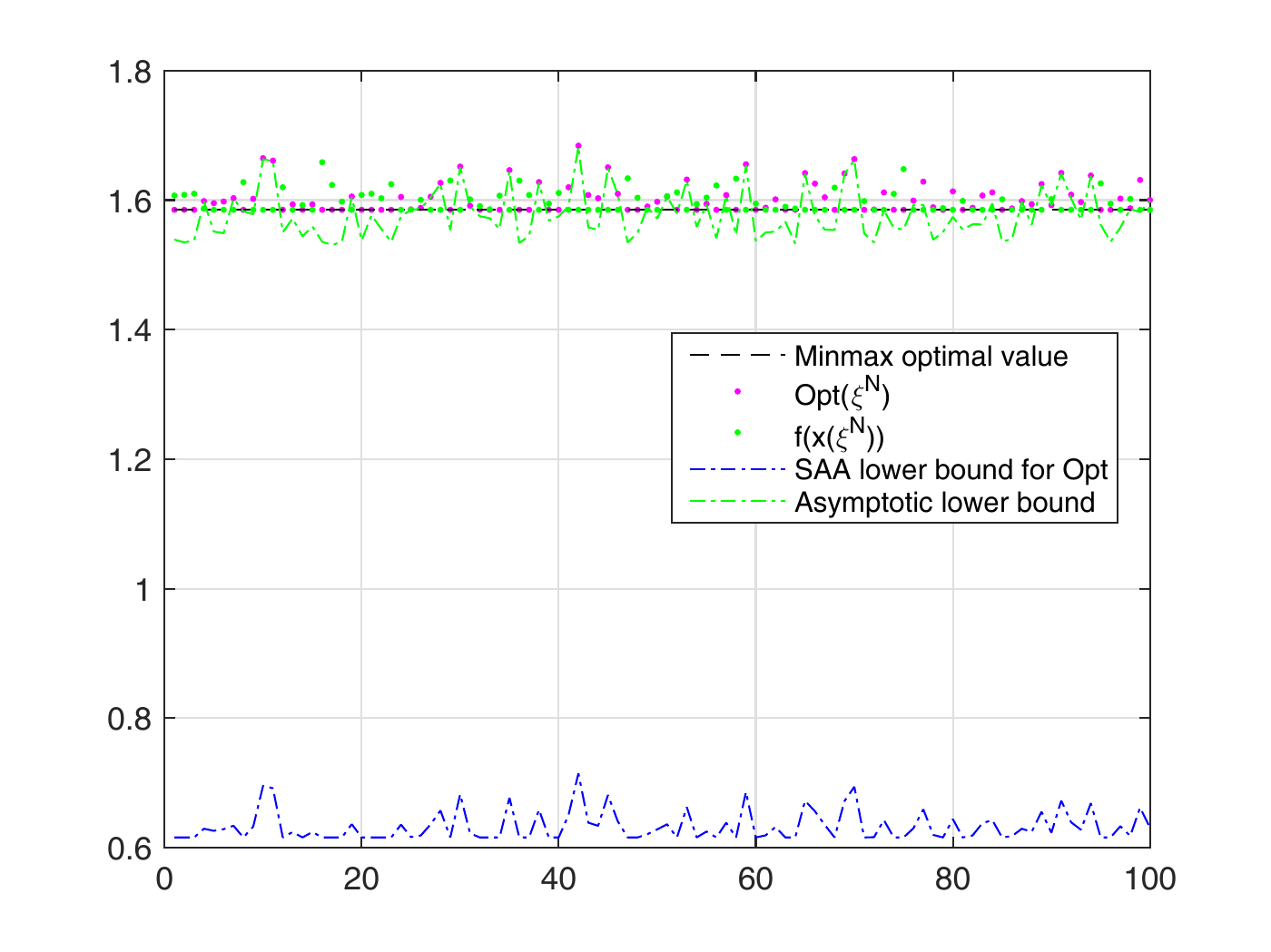}}\\
{\rm (a)} & {\rm (b)}
\end{array}
$$
\caption{ \label{fig:3}  {\small Optimal value $\Opt$ of the stochastic program \rf{minmax-opt} along with lower bound derived from the results of
{Proposition} \ref{corpropmain1} and ``asymptotic'' lower bound $\underline{\Opt}(\xi^N)$. The results for $\varepsilon=0.5$ on plot (a), for $\varepsilon=0.1$ on plot (b).
}}
\end{figure}
\subsection{Optimal value of a stochastically constrained problem}
An SAA of a stochastically constrained problem, even with a single linear constraint, can easily become unstable when the constraint is ``stiff''. As a simple illustration, let us consider a stochastically (linearly) constrained problem
\be
\Opt_\chi= \min_{x}\left[f_0(x):\;\;f_1(x)\leq 0,\;x=[u;v],\;v\in \bR,\,u\in \bR^n, \,\sum_{i=1}^n u_i=1,\; u\ge 0\right],
\ee{CVAR-Opt}
where
\[
f_0(x)=v+\bE\{\varepsilon^{-1}[\xi^Tu-v]_+\}\;\; \mbox{and}\;\;f_1(x)= \chi-\bE\{\xi^Tu\},
\]
and $\varepsilon$ and $\chi$ are problem parameters.  The SAA of the problem is
\be
\Opt_\chi(\xi^N)= \min_{x=[u;v]}\left[f_{0,N}(x):\;\;f_{1,N}(x)\leq 0,\;v\in \bR,\,u\in \bR^n, \,\sum_{i=1}^n u_i=1,\; u\ge 0\right],
\ee{saa222}
where\[
f_{0,N}(x,\xi^N  )=v+{1\over N\varepsilon}\sum_{t=1}^N[\xi_t^Tu-v]_+\;\; \mbox{and}\;\;f_{1,N}(x,\xi^N )= \chi-{1\over N}\sum_{t=1}^N \xi_t^T u.
\]
Consider now a toy example of the problem with $u\in \bR^2$, $\xi\sim\N(\mu, \Sigma)$ with $\mu=[0.1;0.5]$ and $\Sigma=\diag([1;4])$.
Let $N=128$, $\chi=0.3$, and $\varepsilon=0.1$. One can expect that in this case the optimal value $\Opt_\chi(\xi^N)$ of the SAA is unstable (in fact, problem \rf{saa222} is infeasible with probability $\Prob\left\{{1\over N}\sum_{t=1}^N\xi_{t,2}<\chi\right\}=\Prob\left\{{2\cN(0,1)\over \sqrt{N}}\leq -0.2\right\}=0.128...$). We compare the solution to \rf{saa222} with the SAA in which the 
right-hand side $\chi$ of the stochastic constraint is replaced with $\chi-\delta$ where  $\delta=q_\N(1-\varepsilon/n){\smax\over \sqrt{N}}=0.5815...$,
$\smax=\max_i\Sigma_{i, i}$.
On Figure \ref{fig:2} we present the simulation results of $100$ independent realizations of the above problem. As expected, the SAA \rf{saa222} is unstable;  the problem turned infeasible in 22\% of realizations. The SAA  with the relaxed constraint exhibits much better stability.
\begin{figure}
$$
\begin{array}{cc}
\resizebox{.5\textwidth}{!}{
 \includegraphics{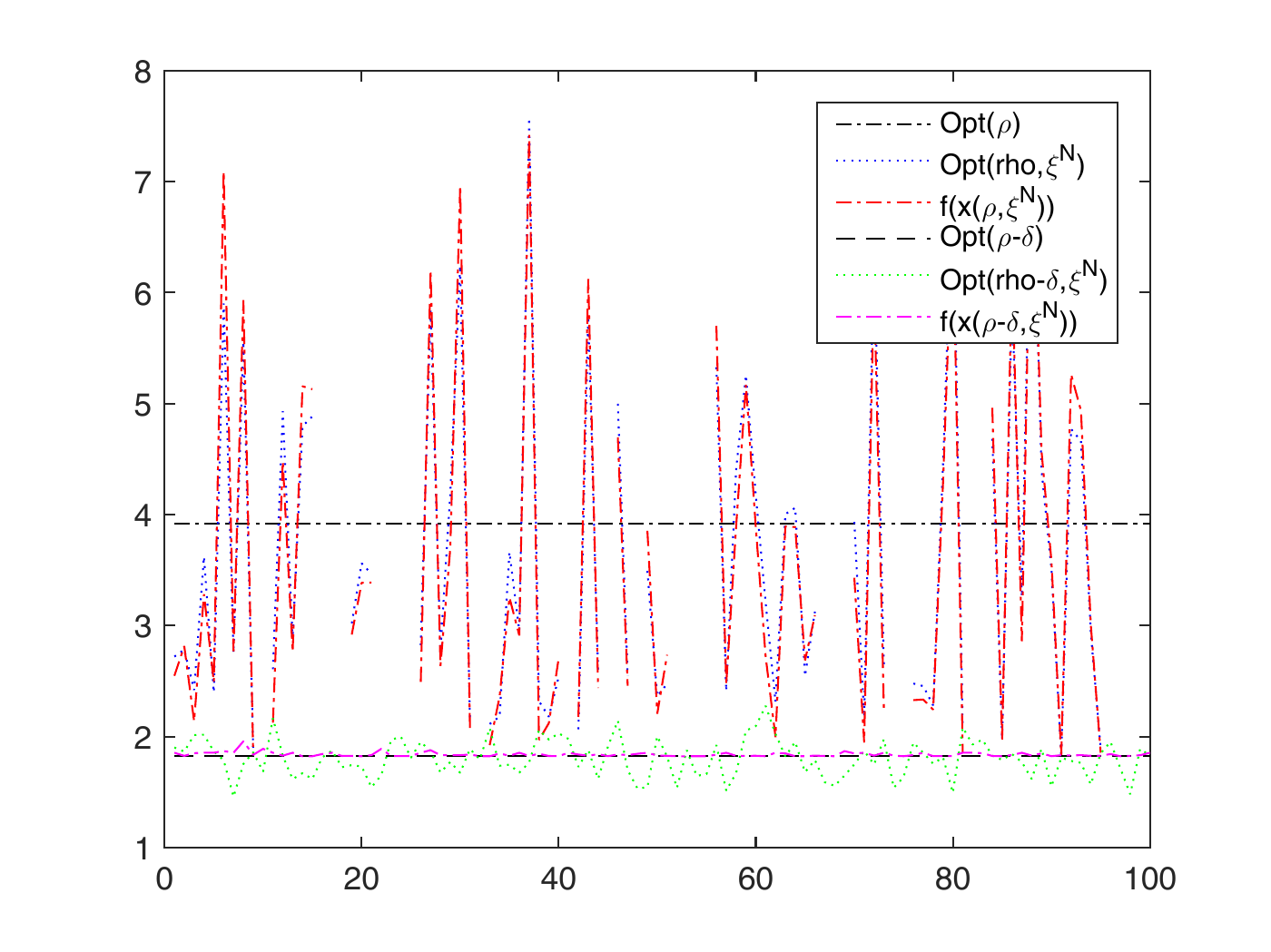}}
&
\resizebox{.5\textwidth}{!}{
 \includegraphics{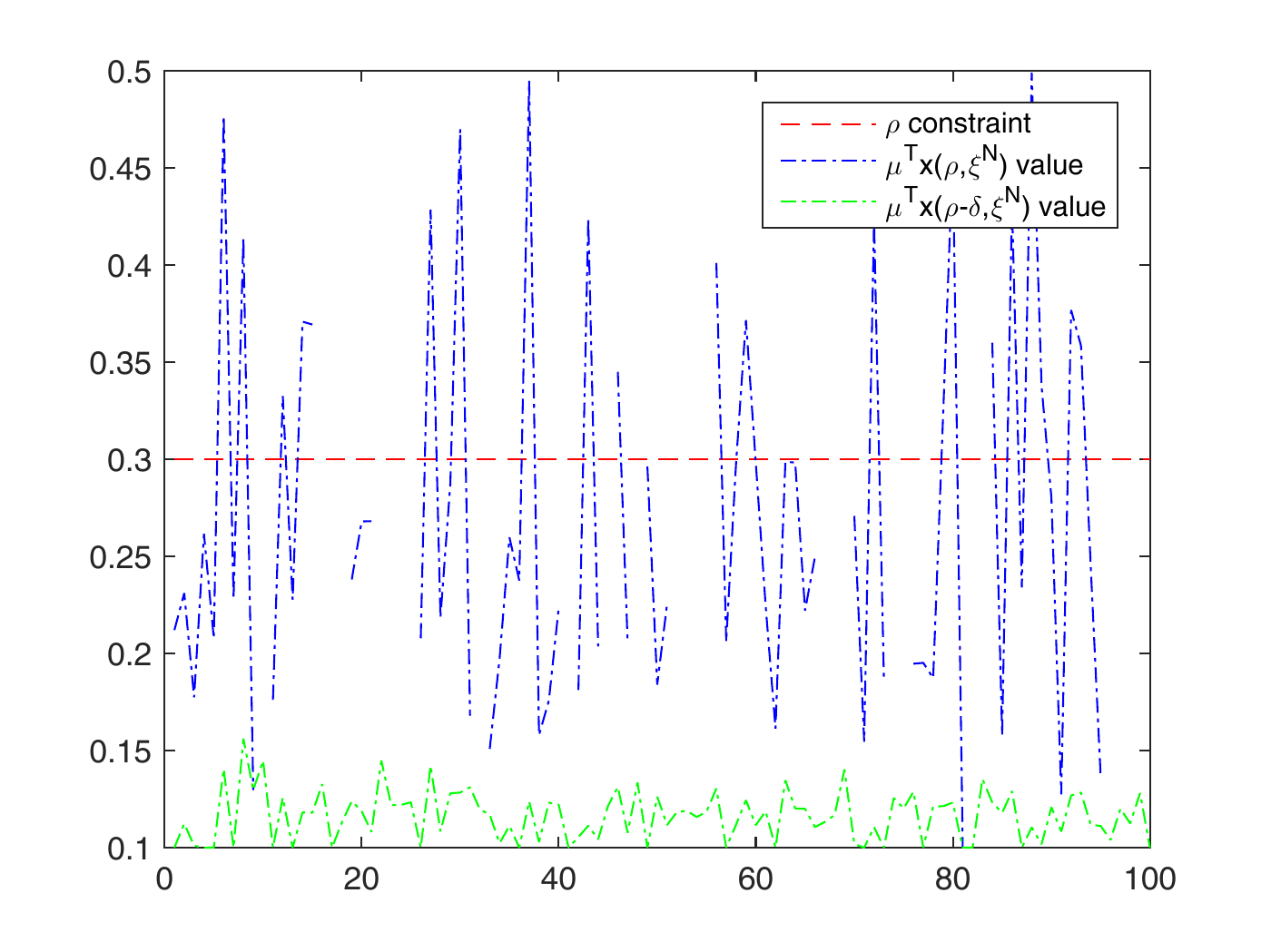}}\\
{\rm (a)} & {\rm (b)}
\end{array}
$$
\caption{ \label{fig:2}  {\small Plot (a): optimal value $\Opt$ of the stochastic program \rf{CVAR-Opt} with constraint right-hand side $\chi$ and $\chi-\delta$, along with corresponding optimal values of the SAA. Plot (b): ``true value'' of the linear form $\mu^Tx(\xi^N)$ at the SAA solution.
}}
\end{figure}

\appendix
\section{Proofs}
\subsection{Preliminaries: Large deviations of vector-valued martingales}
The result to follow is a slightly simplified and refined version of the bounds on probability of large deviations for vector-valued martingales developed in \cite{jn2008,StAppr}.
\par
Let $\|\cdot\|$ be a norm on  Euclidean space $E$, $\|\cdot\|_*$ be the conjugate norm, and $B_{\|\cdot\|}$ be the unit ball of the norm. Further, let $\omega$ be a continuously differentiable distance-generating function for $B_{\|\cdot\|}$ compatible with the norm $\|\cdot\|$ and attaining its minimum on $B_{\|\cdot\|}$ at the origin: $\omega^\prime(0)=0$, with $\omega(0)=0$ and $\Omega=\max_{x:\|x\|\leq1}\sqrt{2[\omega(x)]}$.
\begin{lemma}\label{lem1} Let $d_1,d_1,...$ be a scalar martingale-difference such that for some $\sigma>0$ it holds
$$
\bE\{\e^{d_t^2/\sigma^2}|d_1,...,d_{t-1}\}\leq\e\;\hbox{\em a.s.},\; t=1,2,...
$$

Then
\begin{equation}\label{then1}
\Prob\Big\{\underbrace{{\sum}_{t=1}^Nd_t}_{D_N}>\lambda\sigma\sqrt{N}\Big\}\leq \left\{\begin{array}{rl}
\e^{-{\lambda^2\over 4\tau_*}},& 0\leq\lambda\leq2\sqrt{\tau_*N},\\
\e^{-{\lambda^2\over 3}},& \lambda>2\sqrt{\tau_*N},
\end{array}
\right.
\end{equation}
where $\tau_*$ is defined in Proposition \ref{propmain}.
\end{lemma}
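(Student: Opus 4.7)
The natural route is a Chernoff bound tailored to the defining property of $\tau_*$. Fix $\gamma>0$ and plug $t=\gamma d_t$ into the pointwise inequality $e^t\le t+e^{\tau_* t^2}$ to obtain
$$
e^{\gamma d_t}\le \gamma d_t + e^{\tau_*\gamma^2 d_t^2}.
$$
Taking conditional expectation w.r.t.\ $\mathcal{F}_{t-1}:=\sigma(d_1,\dots,d_{t-1})$ and using the martingale-difference property $\bE\{d_t|\mathcal{F}_{t-1}\}=0$ kills the linear term, leaving
$$
\bE\{e^{\gamma d_t}|\mathcal{F}_{t-1}\}\le \bE\{e^{\tau_*\gamma^2 d_t^2}|\mathcal{F}_{t-1}\}.
$$

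The next step is to control the right-hand side by the hypothesis on $\bE\{e^{d_t^2/\sigma^2}|\mathcal{F}_{t-1}\}$. Restricting $\gamma$ to the range $p:=\tau_*\gamma^2\sigma^2\in(0,1]$, the map $y\mapsto y^{p}$ is concave on $(0,\infty)$, so Jensen gives
$$
\bE\{e^{\tau_*\gamma^2 d_t^2}|\mathcal{F}_{t-1}\}=\bE\bigl\{\bigl(e^{d_t^2/\sigma^2}\bigr)^{p}\big|\mathcal{F}_{t-1}\bigr\}\le \bigl(\bE\{e^{d_t^2/\sigma^2}|\mathcal{F}_{t-1}\}\bigr)^{p}\le e^{p}=e^{\tau_*\gamma^2\sigma^2}.
$$
Iterating over $t=1,\dots,N$ by the tower property yields the unconditional MGF bound $\bE\{e^{\gamma D_N}\}\le e^{N\tau_*\gamma^2\sigma^2}$, valid on $\gamma\in[0,1/(\sigma\sqrt{\tau_*})]$. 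Combining with Markov's inequality gives
$$
\Prob\{D_N>\lambda\sigma\sqrt{N}\}\le \exp\bigl(-\gamma\lambda\sigma\sqrt{N}+N\tau_*\gamma^2\sigma^2\bigr),
$$
and I would then minimize over $\gamma$. The unconstrained optimizer $\gamma^*=\lambda/(2\sigma\sqrt{N}\tau_*)$ sits inside the admissible interval exactly when $\lambda\le 2\sqrt{\tau_* N}$, producing the first-branch bound $e^{-\lambda^2/(4\tau_*)}$.

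For the regime $\lambda>2\sqrt{\tau_* N}$, the unconstrained optimizer exits the range where our Jensen-based MGF estimate is available, which is the main technical obstacle. My plan is to saturate at $\gamma_\circ=1/(\sigma\sqrt{\tau_*})$, which gives $\Prob\{D_N>\lambda\sigma\sqrt{N}\}\le e^{N-\lambda\sqrt{N/\tau_*}}$; this is sharper than $e^{-\lambda^2/3}$ in a moderate sub-regime of large deviations, but degrades relative to $e^{-\lambda^2/3}$ when $\lambda$ becomes very large. To cover the whole tail uniformly, I would supplement this bound by a truncation/union argument: split each increment as $d_t=d_t\one_{|d_t|\le M}+d_t\one_{|d_t|>M}$ with a level $M$ of order $\sigma\sqrt{\lambda/\sqrt{N}}$ (or $\sigma\sqrt{\ln N}$), apply the conditional Hoeffding-type bound to the bounded (recentered) part, and control the big-excursion part via the direct sub-Gaussian tail $\Prob\{|d_t|>u\sigma\}\le e^{1-u^2}$ extracted from the hypothesis by Markov. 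The key tuning is to balance the truncation level so the Hoeffding exponent and the union-bound exponent simultaneously dominate $-\lambda^2/3$, the constant $1/3$ emerging from this balance.

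Finally, I would verify that the two branches glue at $\lambda=2\sqrt{\tau_* N}$ (both bounds being nontrivial there) and state the conclusion; everything else is bookkeeping. The novel piece is the interplay between the inequality defining $\tau_*$ and the square-exponential tail hypothesis; the bounded-increment side of the large-deviation regime is the only part I expect to require genuine care.
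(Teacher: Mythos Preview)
Your treatment of the regime $0\le\lambda\le 2\sqrt{\tau_* N}$ is correct and is exactly the paper's argument: apply the defining inequality $e^{t}\le t+e^{\tau_* t^2}$ with $t=\gamma d_t$, kill the linear term by the martingale property, control the square-exponential term by Jensen (valid because $\tau_*\gamma^2\sigma^2\le 1$), and optimize the Chernoff bound.

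The gap is in the regime $\lambda>2\sqrt{\tau_* N}$. Your truncation/union-bound plan is not carried out, and there is no reason to expect the specific constant $1/3$ to fall out of that balance without substantial work. The paper bypasses truncation entirely with a one-line trick you are missing: for \emph{all} real $\theta,d$ one has the elementary (Young-type) inequality
\[
\theta d\;\le\;\tfrac{3}{8}\theta^2+\tfrac{2}{3}d^2,
\]
which, combined with Jensen ($\bE\{e^{2d_t^2/(3\sigma^2)}\mid\mathcal F_{t-1}\}\le e^{2/3}$), gives directly
\[
\bE\{e^{\theta d_t}\mid\mathcal F_{t-1}\}\le e^{3\theta^2\sigma^2/8+2/3}\le e^{3\theta^2\sigma^2/4}\qquad\text{whenever }\theta\sigma\ge\tfrac{4}{3}.
\]
Since $1/\sqrt{\tau_*}>4/3$, this MGF bound is available on the whole range $\theta\sigma>1/\sqrt{\tau_*}$ where your first bound fails; iterating and optimizing the resulting Chernoff bound at $\theta=\tfrac{2\lambda}{3\sigma\sqrt N}$ yields $e^{-\lambda^2/3}$ immediately. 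Replace your truncation sketch with this argument.
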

\begin{proof}
Assuming without loss of generality that $\sigma=1$ observe that
under the Lemma's premise we have $\bE\{\e^{\tau_*\theta^2d_t^2}|d_1,...,d_{t-1}\}\leq\e^{\tau_*\theta^2}$
whenever $\tau_*\theta^2\leq1$
where $\tau_*$ is defined in Proposition \ref{propmain}, and therefore for almost all $d^{t-1}=(d_1,...,d_{t-1})$ we have for $0\leq\theta\leq {1\over\sqrt{\tau_*}}$
\be
\bE\big\{\e^{\theta d_t}\big|d^{t-1}\big\}\leq \bE\big\{\theta d_t + \e^{\tau_*\theta^2d_t^2}\big|d^{t-1}\big\}
=\bE\big\{\e^{\tau_*\theta^2d_t^2}\big|d^{t-1}\big\}\leq  \e^{\tau_*\theta^2}.
\ee{thus001}
Thus, for $0\leq\theta\leq {1\over\sqrt{\tau_*}}$, we have
$\bE\{\e^{\theta D_N}\}\leq \e^{\tau_*\theta^2N}$, and $\forall \lambda>0$
\[
\Prob\{D_N>\lambda\sqrt{N}\}\leq \e^{\tau_*\theta^2N-\lambda\theta\sqrt{N}}.
\]

When minimizing the resulting probability bound over $0\leq \theta\leq {1\over\sqrt{\tau_*}}$ we get
the inequality \rf{then1} for $\lambda\in [0,2\sqrt{\tau_*N}]$: $\Prob\{D_N>\lambda\sqrt{N}\}\leq \e^{-{\lambda^2\over 4\tau_*}}$.
The corresponding bound for $\lambda>2\sqrt{\tau_*N}$ is given by exactly the
same reasoning as above in which \rf{thus001} is substituted with  the inequality
\[
\bE\left\{\e^{\theta d_t}\big|d^{t-1}\right\}\leq \bE\left\{
\e^{{3\theta^2\over 8}+{2d_t^2\over 3}}\Big|d^{t-1}\right\}\leq
\e^{{3\theta^2\over 8}+{2\over 3}}\leq  \e^{3\theta^2\over 4}
\]
when $\theta>1/\sqrt{\tau_*}$. \end{proof}

\begin{proposition}\label{prop1}
Let $(\chi_t)_{t=1,2,...}$, $\chi_t\in E$,  be a martingale-difference  such that for some $\sigma>0$ it holds
\begin{equation}\label{eq1}
\bE\Big\{\e^{\|\chi_t\|_*^2/\sigma^2}\big|\chi_1,...,\chi_{t-1}\Big\}\leq \e \;\hbox{\em a.s.},\;t=1,2,...
\end{equation}
Then for every $s>1$, we have
\begin{equation}\label{eq9}
\;\;
\Prob\left\{\|\sum_{t=1}^N\chi_t\|_*>\sigma\left[{\Omega\sqrt{N}\over 2}[1+s^2]+\lambda\sqrt{N}\right]\right\}\leq
\left\{\begin{array}{rl}\e^{-N(s^2-1)}+\e^{-{\lambda^2\over4\tau_*}},&0\leq \lambda\leq 2\sqrt{\tau_*N},\\
\e^{-N(s^2-1)}+\e^{-{\lambda^2\over 3}},&  \lambda> 2\sqrt{\tau_*N},
\end{array}\right.
\end{equation}
\end{proposition}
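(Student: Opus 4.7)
The plan is to reduce the vector-valued bound on $\|D_N\|_*$ with $D_N=\sum_{t=1}^N\chi_t$ to two independent scalar concentration claims: a sub-Gaussian martingale bound (handled by Lemma~\ref{lem1}) and an exponential moment bound on $\sum_t\|\chi_t\|_*^2$. The link between these and $\|D_N\|_*$ will be provided by a mirror-descent-type recursion built from the distance-generating function $\omega$.

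First, I would introduce the deterministic ``prox recursion'': set $y_0=0$ and, for a parameter $\gamma>0$ to be specified,
\[
y_{t}=\argmin_{y\in B_{\|\cdot\|}}\left\{-\gamma\langle \chi_{t},y\rangle + V_{y_{t-1}}(y)\right\},\qquad V_x(y):=\omega(y)-\omega(x)-\langle\omega'(x),y-x\rangle.
\]
The standard three-point inequality for the prox mapping, together with $1$-strong convexity of $\omega$, the Fenchel--Young inequality $\gamma\langle\chi_t,y_t-y_{t-1}\rangle\leq\tfrac{\gamma^2}{2}\|\chi_t\|_*^2+\tfrac{1}{2}\|y_t-y_{t-1}\|^2$, yields, for every $y\in B_{\|\cdot\|}$,
\[
\gamma\langle\chi_t,y\rangle\;\leq\;\gamma\langle\chi_t,y_{t-1}\rangle+\tfrac{\gamma^2}{2}\|\chi_t\|_*^2+V_{y_{t-1}}(y)-V_{y_t}(y).
\]
Summing from $t=1$ to $N$, dropping the nonnegative term $V_{y_N}(y)$, using $V_{y_0}(y)=\omega(y)\leq\Omega^2/2$ (recall $\omega(0)=0$, $\omega'(0)=0$), and taking the supremum over $\|y\|\leq1$, I obtain the deterministic inequality
\[
\|D_N\|_*\;\leq\; M_N+\tfrac{\gamma}{2}S_N+\tfrac{\Omega^2}{2\gamma},\qquad
M_N:=\sum_{t=1}^N\langle\chi_t,y_{t-1}\rangle,\quad S_N:=\sum_{t=1}^N\|\chi_t\|_*^2.
\]

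Next I would analyse the two scalar random quantities. Since $y_{t-1}$ is measurable with respect to $\sigma(\chi_1,\dots,\chi_{t-1})$ and $\|y_{t-1}\|\leq1$, the sequence $d_t:=\langle\chi_t,y_{t-1}\rangle$ is a scalar martingale-difference with $|d_t|\leq\|\chi_t\|_*$, so the hypothesis \eqref{eq1} passes to $\bE\{\e^{d_t^2/\sigma^2}\mid\chi_1,\dots,\chi_{t-1}\}\leq\e$ a.s. Lemma~\ref{lem1} then gives, for $\lambda\in[0,2\sqrt{\tau_*N}]$,
\[
\Prob\{M_N>\lambda\sigma\sqrt{N}\}\leq\e^{-\lambda^2/(4\tau_*)},
\]
and the analogous bound $\e^{-\lambda^2/3}$ for $\lambda>2\sqrt{\tau_*N}$. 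For $S_N$, iterating the conditional bound $\bE\{\e^{\|\chi_t\|_*^2/\sigma^2}\mid\cF_{t-1}\}\leq\e$ and applying Markov,
\[
\Prob\{S_N>s^2\sigma^2 N\}\leq\e^{N}\,\e^{-s^2 N}=\e^{-N(s^2-1)},\qquad s>1.
\]

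Finally I would specialize $\gamma=\Omega/(\sigma\sqrt{N})$, which on the intersection of the two events $\{M_N\leq\lambda\sigma\sqrt{N}\}$ and $\{S_N\leq s^2\sigma^2N\}$ gives
\[
\|D_N\|_*\;\leq\;\lambda\sigma\sqrt{N}+\tfrac{\Omega\sigma\sqrt{N}}{2}\,s^2+\tfrac{\Omega\sigma\sqrt{N}}{2}=\sigma\!\left[\tfrac{\Omega\sqrt{N}}{2}(1+s^2)+\lambda\sqrt{N}\right],
\]
exactly matching the right-hand side of \eqref{eq9}. A union bound on the two failure probabilities closes the argument.

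The main obstacle is the deterministic mirror-descent-type inequality: getting the constants right requires careful handling of the Bregman three-point identity, the strong-convexity bound $V_{y_{t-1}}(y_t)\geq\tfrac12\|y_t-y_{t-1}\|^2$, and the telescoping under a maximum over $y\in B_{\|\cdot\|}$. The probabilistic part is then a routine assembly: the non-optimal but convenient choice $\gamma=\Omega/(\sigma\sqrt N)$ (rather than the variance-optimal one) is what produces the $(1+s^2)/2$ factor in \eqref{eq9}.
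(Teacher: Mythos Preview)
Your proof is correct and follows essentially the same route as the paper: a mirror-descent recursion on $B_{\|\cdot\|}$ yields the deterministic decomposition $\|D_N\|_*\leq\zeta_N+\tfrac{\gamma}{2}\eta_N+\tfrac{\Omega^2}{2\gamma}$, after which Lemma~\ref{lem1} controls the scalar martingale $\zeta_N=\sum_t\langle\chi_t,y_{t-1}\rangle$, an exponential-moment/Markov argument controls $\eta_N=\sum_t\|\chi_t\|_*^2$, and the stepsize $\gamma=\Omega/(\sigma\sqrt{N})$ with a union bound finishes. The only cosmetic differences are that the paper first normalizes $\sigma=1$ and indexes the iterates as $x_1=0,\,x_{t+1}=\dots$ (so your $y_{t-1}$ is their $x_t$), and it invokes the summed prox inequality directly (citing \cite[Lemma~2.1]{StAppr}) rather than writing out the per-step three-point/Fenchel--Young derivation as you did.
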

where  $\tau_*$ is defined in Proposition \ref{propmain} and
$\Omega$ is given by \eqref{Omega}.
\begin{proof} By homogeneity, it suffices to consider the case when $\sigma=1$, which we assume from now on.\\
\par{1$^0$.} Let $\gamma>0$. We denote
\[
V_x(u)=\omega(u)-\omega(x)-\langle\omega'(x),u-x\rangle\qquad\qquad[u,x\in B_{\|\cdot\|}]
\]
and consider the recurrence
\[
x_1=0,\;\;x_{t+1}=\argmin_{y\in B_{\|\cdot\|}}\left[V_{x_t}(y)-\langle\gamma\chi_t ,y\rangle\right].
\]
Observe that $x_t$ is a deterministic function of $\chi^{t-1}=(\chi_1,...,\chi_{t-1})$, and that by the standard properties of proximal mapping (see. e.g. \cite[Lemma 2.1]{StAppr}),
$$
\forall (u\in B_{\|\cdot\|}): \gamma\sum_{t=1}^N\langle \chi_t,u-x_t\rangle \leq   V_{0}(u)-V_{x_{N+1}}(u)+{\gamma^2\over 2}\sum_{t=1}^N\|\chi_t\|_{*}^2
\leq \half\Omega^2+{\gamma^2\over 2}\sum_{t=1}^N\|\chi_t\|_{*}^2.
$$
Thus
$$
\max_{u\in B_{\|\cdot\|}}\Big\langle \sum_{t=1}^N\chi_t,u\Big\rangle \leq {\Omega^2\over 2\gamma}+{\gamma\over 2}\underbrace{\sum_{t=1}^N\|\chi_t\|_{*}^2}_{\eta_N}+\underbrace{\sum_{t=1}^N\langle\chi_t,x_t\rangle}_{\zeta_N}.
$$
Setting $\gamma=\Omega/\sqrt{N}$, we arrive at
\begin{equation}\label{eq2}
\max_{u\in B_{\|\cdot\|}}\Big\langle \sum_{t=1}^N\chi_t,u\Big\rangle \leq {\Omega\sqrt{N}\over 2}\left[1+{\eta_N\over N}\right]+\zeta_N.
\end{equation}
Invoking (\ref{eq1}), we get
$$
\bE\{\e^{\eta_N}\}\leq \e^N
$$
(recall that $\sigma=1$), whence
\begin{equation}\label{eq3}
\forall s>0:\;\; \Prob\big\{\eta_N>s^2 N\big\}\leq \min\left[1,\e^{N(1-s^2)}\right].\\
\end{equation}
\par{2$^0$.} When invoking (\ref{eq1}) and taking into account that $x_t$  is a deterministic function of $\chi^{t-1}$ such that $\|x_t\|\leq1$ (since $x_t\in B_{\|\cdot\|}$), we get
\begin{equation}\label{weget0}
\bE\{  \langle \chi_t,x_t\rangle |\chi^{t-1}\}=0,\;\;\bE\{ \e^{  \langle \chi_t,x_t \rangle^2 } |\chi^{t-1}\}\leq\e.
\end{equation}
Applying Lemma \ref{lem1} to the random sequence $d_t= \langle\chi_t,x_t\rangle $, $t=1,2,...$ (which is legitimate, with $\sigma$ set to 1, by (\ref{weget0})), we get
\begin{equation}\label{weget00}
\Prob\left\{\zeta_N>\lambda\sqrt{N}\right\}\leq \left\{\begin{array}{rl}
\e^{-{\lambda^2\over 4\tau_*}},& 0\leq\lambda\leq2\sqrt{\tau_*N},\\
\e^{-{\lambda^2\over 3}},& \lambda>2\sqrt{\tau_*N}.
\end{array}\right.
\end{equation}
In view of (\ref{eq3}) and (\ref{weget00}), relation (\ref{eq2}) implies
the bound (\ref{eq9}) of the proposition. \end{proof}
\subsection{Proof of Proposition \protect{\ref{propmain}}}
 Let $x_*$ be an optimal solution to (SP), and let $h=\nabla f(x_*)$, so that by optimality conditions
\begin{equation}\label{h}
\langle h,x-x_*\rangle \geq0\,\,\forall x\in X.\\
\end{equation}
\par{1$^0$.} Setting $\delta(\xi)=F(x_*,\xi)-f(x_*)$, invoking (\ref{bounds}.$a$) and applying Lemma \ref{lem1} to the random sequence $d_t=\delta(\xi_t)$ and $\sigma=M_1$ (which is legitimate by (\ref{bounds}.$a$)), we get
\begin{equation}\label{weget1}
\forall (N\in\bZ_+,\;\mu\in[0,2\sqrt{\tau_*N}]):\;
\Prob\Big\{{1\over N}\sum_{t=1}^N\delta(\xi_t)>\mu M_1 N^{-1/2}\Big\}\leq \e^{-{\mu^2\over 4\tau_*}}.
\end{equation}
Since clearly
\[
\Opt_N(\xi^N)\leq f_N(x_*,\xi^N)=\Opt+{1\over N}\sum_{t=1}^N\delta(\xi_t),
\]
we get
\begin{equation}\label{formlowboundsaa}
\Prob\Big\{\Opt_N(\xi^N)>\Opt+ \mu M_1 N^{-1/2}\Big\}\leq \e^{-{\mu^2\over 4\tau_*}}.\\
\end{equation}

\par{2$^0$.}
It is immediately seen that under the premise of Proposition \ref{propmain}, for every measurable vector-valued function $g(\xi)\in \partial_x F(x_*,\xi)$ we have
\begin{equation}\label{hh}
h=\int_\Xi g(\xi)P(d\xi).
\end{equation}
Observe that $h_N(\xi^N)={1\over N}\sum_{t=1}^Ng(\xi_t)$ is a subgradient of $f_N(x,\xi^N)$ at the point $x_*$. Consequently, for all $x\in X$,
\bse
\lefteqn{f_N(x,\xi^N)\geq f_N(x_*,\xi^N)+\langle h_N(\xi^N),x-x_*\rangle}\\
 &\geq& \underbrace{[f(x_*)+\langle h,x-x_*\rangle]}_{\geq\Opt\hbox{\ by (\ref{h})}}
+[[f_N(x_*,\xi^N)-f(x_*)]+\langle h_N(\xi^N)-h,x-x_*\rangle]\\
&\geq&\Opt+{1\over N}\sum_{t=1}^N \delta(\xi_t) -\|h-h_N(\xi^N)\|_{*}\|x-x_*\|\geq \Opt+{1\over N}\sum_{t=1}^N \delta(\xi_t)-2\|h-h_N(\xi^N)\|_{*}R
\ese
(the concluding inequality is due to $x,x_*\in X$ and thus $\|x-x_*\|\leq 2R$ by definition of $R$). It follows that
\begin{equation}\label{eq16}
\Opt_N(\xi^N)\geq\Opt+{1\over N}\sum_{t=1}^N \delta(\xi_t)-2\|h-h_N(\xi^N)\|_{*}R.
\end{equation}
Applying Lemma \ref{lem1} to the random sequence $d_t=-\delta(\xi_t)$ we, similarly to the above, get
\begin{equation}\label{less}
\forall (N,\mu\in[0,2\sqrt{\tau_*N}]):
\Prob\left\{{1\over N}\sum_{t=1}^N \delta(\xi_t)<-\mu M_1 N^{-1/2}\right\}\leq \e^{-{\mu^2\over 4\tau_*}}.\\
\end{equation}

Further, setting $\Delta(\xi)=g(\xi)-\nabla f(x_*)$, the random vectors $\chi_t=\Delta(\xi_t)$, $t=1,2,...$, are i.i.d., zero mean (by (\ref{hh})), and satisfy the relation
$$
\bE\left\{\e^{\|\chi_t\|_{*}^2/M_2^2}\right\}\leq \e
$$
by (\ref{bounds}.$b$); besides this, $h_N(\xi^N)-h={1\over N}\sum_{t=1}^N\chi_t$. Applying Proposition \ref{prop1}, we get
$$
\begin{array}{l}
\forall (N\in \bZ_+, \,s>1,\,\lambda\in[0,2\sqrt{\tau_*N}]):\\
\Prob\{\|h-h_N(\xi^N)\|_{*}\geq M_2\left[{\Omega\over 2}[1+s^2]+\lambda\right]N^{-1/2}\}\leq \e^{-N(s^2-1)}+\e^{-{\lambda^2\over4\tau_*}}.\\
\end{array}
$$
This combines with  (\ref{eq16}), and (\ref{less}) to imply \rf{lower}.
\qed
\subsection{Proof of Proposition \protect{\ref{pro:lowersaa}}}
Due to similarity reasons, it suffices to prove the proposition for $L=R=1$. Let $B_2$ be the unit Euclidean ball of $\R^n$, and let for a unit $v\in \bR^n$ and $0<\theta\leq \pi/2$, $h_{v,\theta}$ be the spherical cap of $B_2$ with ``center'' $v$ and angle $\theta$. In other words, if $\delta=2\sin^2(\theta/2)$ is the
``elevation'' of the cap $h_{v,\theta}$ then $h_{v,\theta}=\{x\in B_2:\;v^Tx\geq 1-\delta\}$. Observe that for any $\vartheta>4\theta$ we can straightforwardly build the system $D_\theta$ of vectors in the $n$-dimensional unit sphere $S^{n-1}$ in such a way that the
angle between  every two distinct vectors of the system is $>2\theta$, so that
the spherical caps $h_{v,\theta}$ with $v\in D_\theta$ are mutually disjoint, while the spherical caps $h_{v,\vartheta}$  cover $S_{n-1}$.
If we denote by $A_{n-1}(\vartheta)$ the area of the spherical cap of angle $\vartheta\leq \pi/2$, then
 $\Card(D_\theta)A_{n-1}(\vartheta)\geq s_{n-1}(1)$, where $s_{n-1}(r)={2\pi^{n/2}r^{n-1}\over \Gamma(n/2)}$ is the area of the $n$-dimensional sphere of radius $r$. Note that  $A_{n-1}(\vartheta)$  satisfies
\[
A_{n-1}(\vartheta)=\int_0^{\vartheta} s_{n-2}(\sin t)dt=s_{n-2}(1)\int_0^{\vartheta} \sin^{n-2}t dt\leq s_{n-2}(1)\int_0^{\vartheta} t^{n-2}dt=s_{n-2}(1){\vartheta^{n-1}\over n-1}.
\]
We conclude that
\[
\Card(D_\theta) \geq{s_{n-1}(1)(n-1)\over s_{n-2}(1)\vartheta^{n-1}}\geq 3\vartheta^{1-n}
\]
for $n\geq 2$. From now on we fix $\theta=1/8$ and when choosing $\vartheta$ arbitrarily  close to $4\theta=\half$, we conclude that for any $n\geq 2$ one can build $D_\theta$ such that $\Card(D_\theta)\geq 2^n$.
\par
Now consider the following construction: for $v\in D_\theta$, let $g_{v, \theta}(\cdot):B_2\to \bR$ be defined according to
$g_{v,\theta}(x)=[v^Tx-(1-\delta)]_+$, where $\delta=2\sin^2(\theta/2)=0.0078023...$ is the elevation of $h_{v,\theta}$. Let us put
\[
f(x)=\sum_{v\in D_\theta}g_{v,\theta}(x),
 \]
 and consider the optimization problem
$
\Opt= \min [ f(x):\;x\in B_2]. $
 Since $g_{v,\delta}$ is affine on  $h_{v,\delta}$ and vanishes elsewhere on $B_2$, and $\|v\|_2=1$, we conclude that $f$ is Lipschitz continuous on $B_2$ with Lipschitz constant $1$. Let now
\[
F(x,\xi)=\sum_{v\in D_\theta} 2\xi_v g_{v,\theta}(x),
 \]
 where $\xi_{v},\;v\in D_\theta$ are i.i.d. Bernoulli random variables with $\Prob\{\xi_v=0\}=\Prob\{\xi_v=1\}=\half$. Note that $\bE_\xi\{F(x,\xi)\}=f(x)\;\forall x\in B_2$. Further, for $x\in h_{v,\theta}$,
$
\bE_\xi\{F(x,\xi)^2-f(x)^2\}=g^2_{v,\theta}(x)\leq\delta^2,
$ and
\[
\|F'(x,\xi)-f'(x)\|^2_2=\|(2\xi_v-1) g'_{v,\theta}(x)\|_2^2\leq 1.
\]
Let us now consider the SAA $f_N(x, \xi^N)$ of $f$,
\be
f_N(x, \xi^N)={1\over N}\sum_{t=1}^N F(x,\xi_t)=\sum_{v\in D_\theta }\underbrace{{1\over N}\sum_{t=1}^N \xi_{t,v} g_{v,\theta}(x)}_{g^N_{v,\theta}(x)},
\ee{saa:121} $\xi_t,\;{t}=1,...,N$ being independent realizations of $\xi$,
and the problem of computing
\be
\Opt_N (\xi^N)=\min[ f_N(x, \xi^N):\;{x\in B_2}].
\ee{eq:32}
Note that for a given $v\in D_\theta$, $\Prob\{\sum_{t=1}^N\xi_{t,v}=0\}=2^{-N}$. Due to the independence of $\xi_v$,
we have
\[\Prob\left\{ \sum_{t=1}^N\xi_{t,v}>0,\;\forall v\in D_\theta\right\}=(1-2^{-N})^{\Card(D_\theta)}\leq  (1-2^{-N})^{2^{n}}\leq \e^{-{2^{n}\over 2^{N}}}\leq \exp(-1),
\]
for $N\leq n$. We conclude that for $N\leq n$,  with probability $\geq 1-\e^{-1}$, at least one of the summands in the right-hand side of \rf{saa:121}, let it be  $g^N_{\bar{v},\theta}(x)$, is identically zero on $B_2$. The 
optimal value $\Opt_N(\xi^N)$ of \rf{eq:32} being zero, the point $x(\xi^N)=\bar{v}$ is
clearly a minimizer of $f_N(x, \xi^N)$ on $B_2$, yet $f(x(\xi^N))=\delta$, {i.e., \eqref{bdSAA} holds with $c_0=\delta$}.\qed
\subsection{Proof of Proposition \protect{\ref{pro:lower2arik}}}
\par{1$^0$.}
Let us consider a family of stochastic optimization problems as follows. Let $\|\cdot\|=\|\cdot\|_2$ and let $X$ be the unit $\|\cdot\|_2$-ball in $\bR^n$.
Given a unit vector $h$ in $\bR^n$, positive reals $\sigma,s$ and $\delta,d$, and setting $\xi=[\eta;\zeta]\sim\cN(0,I_2)$, consider two integrands:
\[
F_0(x,\xi)=\sigma\eta h^Tx+s\zeta,\;\;\;
F_1(x,\xi)=(\delta h+\sigma\eta h)^Tx+(s\zeta-d),
\]
so that
\[
f_0(x):=\bE_{\xi}\left\{F_0(x,\xi)\right\}=0,\;\;\;
f_1(x):=\bE_{\xi}\left\{F_1(x,\xi)\right\}=\delta h^Tx-d.
\]
Let us now check that $F_0$ and $F_1$ verify the premises of Proposition \ref{propmain}.
In the notation of Proposition \ref{propmain}, we have for $F_1$
$$
L(x,\xi)=\|[\delta h+\sigma\eta h]-\delta h\|_2=\sigma|\eta|,
$$
whence, setting $M_2=\sigma/\gamma$ with $\gamma^2=\half (1-\e^{-2})$,
$$
\bE_{\xi}\left\{\exp\{L(x,\xi)^2/M_2^2\}\right\}=\exp\{1\}.
$$
Similarly, setting $M_1=\sqrt{\sigma^2+s^2}/\gamma$, we have
$$
\bE_{\xi}\left\{\exp\{(\sigma\eta+s\zeta)^2/{M_1^2}\}\right\} = \exp\{1\},
$$
so that, for every $z\in[-1,1]$,
$$
\bE_{\xi}\left\{\exp\{(\sigma\eta z+s\zeta)^2/M_1^2\}\right\}\leq \exp\{1\}.
$$
When $x\in\bR^n$ and $\|x\|_2\leq1$, we have $F_1(x,\xi)-f_1(x)=\sigma\eta h^Tx+s\zeta$, therefore
$$
\bE_{\xi}\left\{\exp\{(F_1(x,\xi)-f_1(x))^2/M_1^2\}\right\}\leq\exp\{1\}.
$$
We conclude that $F=F_1$ satisfies the premise of Proposition \ref{propmain} with
$$
M_1=\sqrt{\sigma^2+s^2}/\gamma,\;M_2=\sigma/\gamma.
$$
It is immediately seen that $F=F_0$ satisfies the premise of Proposition \ref{propmain} with the same $M_1,\,M_2$.\\
\par{2$^0$.}
Now, with $X=\{x\in\bR^n:\|x\|_2\leq1\}$, the optimal values in the problems of minimizing over $X$ the functions $f_0$ and $f_1$ are, respectively,
$$
\Opt_0=0,\;\;\;\Opt_{1}=-\delta-d.
$$
Suppose that there exists a procedure which, under the premise of Proposition 1 with some fixed $M_1$, $M_2$, is able,
given $N$ observations of $\nabla_x F(\cdot,\xi_t),\;F(\cdot,\xi_t)$, to cover $\Opt$, with confidence $1-\alpha$, by an interval of width $W$. Note that when $W<|\Opt_1|$,
the same procedure can distinguish between the hypotheses stating that the observed first order information on $f$ comes from $F_0$
{or} from $F_1$, with risk (the maximal probability of rejecting the true hypothesis) $\alpha$. On the other hand,
when $F=F_0$ or $F=F_1$, our observations  are deterministic functions of the samples $\omega_1$,...,$\omega_N$ drawn from the 2-dimensional normal distribution
  $\cN\left(\left[\begin{array}{c}0\\0\end{array}\right],\;\left[\begin{array}{cc}\sigma^2&0\\0&s^2\end{array}\right]\right)$
 for $F=F_0$, and $\cN\left(\left[\begin{array}{c}\delta\\d\end{array}\right],\;\left[\begin{array}{cc}\sigma^2&0\\0&s^2\end{array}\right]\right)$ for  $F=F_1$.
It is well known that deciding between such hypotheses with risk $\leq \alpha$ is possible only if
$$
\sqrt{{\delta^2\over \sigma^2}+{d^2\over s^2}}\geq {2\over\sqrt{N}} q_\N(1-\alpha).$$
We arrive at the following lower bound on $W$, given $M_1$, $M_2$, with $M_1\geq M_2>0$:
\bse
W&\geq& \max_{{\delta\geq0,\;d\geq0}}\left\{{\delta+d}:\; \sqrt{{\delta^2\over \gamma^2M_2^2}+{d^2\over\gamma^2(M_1^2-M_2^2)}}\leq
  {2\over\sqrt{N}} q_\N(1-\alpha)\right\}
  ={2\gamma M_1 \over\sqrt{N}} q_\N(1-\alpha) = {\underbar{W}}.~~~~\square
\ese
\subsection{Proof of Lemma \protect{\ref{vincentlemma}}}
Without loss of generality we may assume that $\Opt=0$. Let $\bar{x}$ be such that $f_i(\bar{x})\leq-\varkappa$, $1\leq i\leq m$. Given $\delta>0$, there
 exists $x_\delta\in X$ such that $f_0(x_\delta)+\delta \leq \Phi(-\delta)$ and $f_i(x_\delta)\leq \Phi(-\delta)$, $1\leq i\leq m$; note that $\Phi(-\delta)>0$ due to $-\delta<0=\Opt$. The point
 \[
 x={\Phi(-\delta)\over \varkappa+\Phi(-\delta)}\bar{x}+{\varkappa\over \varkappa+\Phi(-\delta)}x_\delta
  \]
  belongs to $X$ and is feasible for (\ref{CSP}), since for $i\geq1$ one has
\[
f_i(x)\leq {\Phi(-\delta)\over \varkappa+\Phi(-\delta)}f_i(\bar{x})+{\varkappa\over \varkappa+\Phi(-\delta)}f_i(x_\delta)\leq -{\Phi(-\delta)\varkappa\over \varkappa+\Phi(-\delta)}+{\varkappa\Phi(-\delta)\over \varkappa+\Phi(-\delta)}=0.
\]
As a result,
\[
0=\Opt\leq f_0(x)\leq {\Phi(-\delta)\over \varkappa+\Phi(-\delta)}f_0(\bar{x})+{\varkappa\over \varkappa+\Phi(-\delta)}f_0(x_\delta)\leq {\Phi(-\delta)V\over \varkappa+\Phi(-\delta)}+{\varkappa\over \varkappa+\Phi(-\delta)}[\Phi(-\delta)-\delta].
\] The resulting inequality implies
$(\Phi(-\delta)-\Phi(0))/\delta=\Phi(-\delta)/\delta\geq\varkappa/(\varkappa+V)$; when passing to the limit as $\delta\to+0$, we get $\vartheta \geq\varkappa/(V+\varkappa)$.\qed
\subsection{Proof of Proposition \protect{\ref{propmain1}}}
Let us fix parameters $N$, $s$, $\lambda$, $\mu$ satisfying the premise of the proposition, let $\epsilon$, $\delta$ be associated with these parameters according to (\ref{epsilon}). We denote
\[\begin{array}{l}
\bar{f}_{0,N}(x,\xi^N)=f_{0,N}(x,\xi^N)-\mu M_1 N^{-1/2},\\
\bar{f}_{i,N}(x,\xi^N)=f_{i,N}(x,\xi^N)-\mu M_1 N^{-1/2},\;\;1\leq i\leq m,
\end{array}
\]
and set
$$
\overline{\Phi}_N(r,\xi^N)=\min_{x\in X}\max\left[\bar{f}_{0,N}(x,\xi^N)-r,\bar{f}_{1,N}(x,\xi^N),...,\bar{f}_{m,N}(x,\xi^N)\right].
$$
Then $\overline{\Phi}_N(r,\xi^N)$ is a convex nonincreasing function of $r\in \bR$ such that
$$
\Opt_N(\xi^N)=\min\{r: \overline{\Phi}_N(r,\xi^N)\leq0\}.
$$
Finally, let $\bar{r}$ be the smallest $r$ such that $\Phi(r)\leq\epsilon$. Since (\ref{CSP}) is feasible and $\Phi(r)\to\infty$ as $r\to-\infty$, $\bar{r}$ is a well defined real which is $<\Opt$ (since $\Opt$ is the smallest root of $\Phi$) and satisfies $\Phi(\bar{r})=\epsilon$.
\par
Let us set
$$
\widehat{\Xi}=\underbrace{\big\{\xi^N: \overline{\Phi}_N(\Opt,\xi^N)\leq0\big\}}_{\Xi_1}\cap\underbrace{\big\{\xi^N: \overline{\Phi}_N(\bar{r},\xi^N)>0\big\}}_{\Xi_2}.
$$
Since $\overline{\Phi}_N(r,\xi^N)$ is a nonincreasing function of $r$ and $\Opt_N(\xi^N)$ is the smallest root of $\overline{\Phi}_N(\cdot,\xi^N)$, for $\xi^N\in \widehat{\Xi}$ we have $\bar{r}\leq \Opt_N(\xi^N)\leq\Opt$. The left inequality here implies that $\Phi(\Opt_N(\xi^N)) \leq\epsilon$ (recall that $\Phi$ is nonincreasing and $\Phi(\bar{r})=\epsilon$). The bottom line is that when $\xi^N\in \widehat{\Xi}$, $\Opt_N(\xi^N)$ 
$\epsilon$-underestimates $\Opt$. Consequently, all we need to prove is that $\xi^N\not\in \widehat{\Xi}$ with probability at most $\delta$.\\
\par{1$^0$.} Let $x_*$ be an optimal solution to (\ref{CSP}). Same as in the proof of Proposition \ref{propmain}, for every $i$, $0\leq i \leq m$, we have (see (\ref{weget1}))
$$
\Prob\big\{f_{i,N}(x_*,\xi^N)>f_i(x_*)+\mu M_1 N^{-1/2}\big\}\leq\e^{-{\mu^2\over 4\tau_*}},
$$
whence for the event
$$
{\Xi'}=\big\{\xi^N:f_{i,N}(x_*,\xi^N)\leq f_i(x_*)+\mu M_1 N^{-1/2},\;0\leq i\leq m\big\}
$$
it holds
\be
\Prob\big\{\xi^N\not\in {\Xi'}\big\}\leq(m+1)\e^{-{\mu^2\over 4\tau_*}}.
\ee{star1}
By the origin of $x_*$ we have $f_0(x_*)\leq\Opt$ and $f_i(x_*)\leq0$, $1\leq i\leq m$. Therefore, for $\xi^N\in {\Xi'}$ it holds $\bar{f}_{0,N}(x_*,\xi^N)\leq \Opt$ and
$\bar{f}_{i,N}(x_*,\xi^N)\leq0$, $1\leq i\leq m$, that is,
$$\overline{\Phi}_N(\Opt,\xi^N)\leq\max[\bar{f}_{0,N}(x_*,\xi^N)-\Opt,\bar{f}_{1,N}(x_*,\xi^N),...,\bar{f}_{m,N}(x_*,\xi^N)]\leq0,$$
implying that $\xi^N\in \Xi_1$. We conclude that ${\Xi'} \subset \Xi_1$, and, by \rf{star1},
\begin{equation}\label{eqXi1}
\Prob\{\xi^N\not\in\Xi_1\}\leq (m+1)\e^{-{\mu^2\over 4\tau_*}}.\\
\end{equation}
\par{2$^0$.}  We have $\epsilon=\Phi(\bar{r})=\min_{x\in X} \max[f_0(x)-\bar{r},f_1(x),...,f_m(x)]$, whence by von Neumann's Lemma there exist nonnegative $y_i\geq0$, $0\leq i\leq m$,
summing up to 1, such that
$$
\begin{array}{rcl}
\epsilon&=&\min_{x\in X}[\ell(x):=y_0(f_0(x)-\bar{r})+\sum_{i=1}^m y_if_i(x)]\\
&=&\min_{x\in X}\Big[\int_\Xi\underbrace{\big[y_0[F_0(x,\xi)-\bar{r}]+{\sum}_{i=1}^m y_i F_i(x,\xi)\big]}_{\cL(x,\xi)}P(d\xi)\Big].\\
\end{array}
$$
Under the premise of the proposition, the integrand $F$ satisfies all assumptions of Proposition \ref{propmain}. Setting
$$
\ell_N(x,\xi^N)={1\over N}\sum_{i=1}^N\cL(x,\xi_i)
$$
and applying Proposition \ref{propmain} we get
\[\begin{array}{l}
\Prob\left\{\xi^N: \min_{x\in X} \ell_N(x,\xi^N)<\underbrace{{\min}_{x\in X}\ell(x)}_{=\epsilon}-\left[\mu  M_1+\left[\Omega [1+s^2]+2\lambda\right]{M_2}R\right]N^{-1/2}\right\}\\
~~\leq
\e^{-N(s^2-1)}+\e^{-{\mu^2\over 4\tau_*}}+\e^{-{\lambda^2\over 4\tau_*}}.
\end{array}\]
Now, in view of
$$\ell_N(x,\xi^N)=\underbrace{\lambda_0[\bar{f}_{0,N}(x,\xi^N)-\bar{r}]+\sum_{i=1}^m\lambda_i  {\bar f_{i, N}} (x,\xi^N)}_{\bar{\ell}_N(x,\xi^N)}+\mu M_1 N^{-1/2},$$
and due to the evident relation
$\min_{x\in X}\bar{\ell}_N(x,\xi^N) \leq \overline{\Phi}_N(\bar{r},\xi^N)$, we get
\bse
\lefteqn{\Prob\left\{\overline{\Phi}_N(\bar{r},\xi^N)<{\epsilon- \left[\mu  M_1+\left[\Omega [1+s^2]+2\lambda\right]{M_2}R\right]N^{-1/2} -\mu M_1 N^{-1/2}}\right\}}\\
&\leq&\Prob\left\{\min_{x\in X}\ell_N(x,\xi^N)<\epsilon-2N^{-1/2}\left[\mu M_1+M_2 {R}\left[{\Omega\over 2}[1+s^2]+\lambda\right]\right]\right\}\\
&\leq& \e^{-{\mu^2\over 4\tau_*}}+\e^{-N(s^2-1)}+\e^{-{\lambda^2\over 4\tau_*}}.
\ese
By (\ref{epsilon}), we have
\[
\epsilon-2\left[\mu M_1+M_2{R}\left[{\Omega\over 2}[1+s^2]+\lambda\right]\right]N^{-1/2}>0,
\] and we arrive at
$$
\Prob\big\{\xi^N\not\in\Xi_2\big\}=\Prob\big\{\overline{\Phi}_N(\bar{r},\xi^N)\leq0\big\}\leq \e^{-{\mu^2\over 4\tau_*}}+\e^{-N(s^2-1)}+\e^{-{\lambda^2\over 4\tau_*}}.
$$																																																															The latter bound combines with (\ref{eqXi1}) to imply the desired relation
$$
\Prob\big\{\xi^N\not\in\Xi\big\}\leq \e^{-{\mu^2\over 4\tau_*}}+\e^{-N(s^2-1)}+\e^{-{\lambda^2\over 4\tau_*}}+(m+1)\e^{-{\mu^2\over 4\tau_*}}=\beta.\eqno{\hbox{\qed}}
$$

\section{Evaluating approximation parameters}\label{B-appendix}
For the sake of completeness we provide here the straightforward derivations of the parameter estimates used to build the bounds in the numerical section.
\subsection{Notation}
Let $P$ be a Borel probability distribution on $\bR^k$  and let $\Xi$ be the support of $P$. Consider the space $\cC$
of all Borel functions $g(\cdot):\Xi\to \bR$ such that $\bE_{\xi\sim P} \{\exp\{g^2(\xi)/M^2\} \}  <\infty$ for some $M=M(g)$. For $g\in \cC$, we set
\begin{equation} \label{orliczseminorm}
\pi[g]=\inf\big\{M \geq 0:\; \bE_{\xi\sim P} \{\exp\{g^2(\xi)/M^2\} \} \leq\exp\{1\}\big\}.
\end{equation}
It is well known \cite{van2013bernstein} that $\cC$ is a linear subspace in the space of real-valued Borel functions on $\Xi$ and $\pi[\cdot]$ is a semi-norm on this (Orlicz) space. Besides,
for a constant $g(\cdot)\equiv a$ we have $\pi[g]=|a|$, and $|g(\cdot)|\leq |h(\cdot)|$ with $h\in\cC$ and Borel $g$ implies $g\in \cC$ and $\pi[g]\leq\pi[h]$.
\par
Given a convex compact set $X\subset\bR^n$, a norm $\|\cdot\|$ on $\bR^n$, and a continuously differentiable distance-generating function $\omega(\cdot)$
for the unit ball $B_{\|\cdot\|}$ which  is compatible with this norm, let $R$ be the radius of the smallest  $\|\cdot\|$-ball containing $X$. Given a Borel function $F(x,\xi):\bR^n\times \Xi\to\bR$
which is convex in $x\in\bR^n$ and $P$-summable in $\xi$ for every $x$, let
\[
f(x)=\bE \{F(x,\xi)\}:\;X\to\bR.
\]
We set
$$
\begin{array}{rcl}
M_{1,\infty}&=&\sup_{x\in X, \xi\in\Xi}|F(x,\xi)-f(x)|,\\
M_{1,\exp}&=&\sup_{x\in X}\pi[F(x,\cdot)-f(x)],\\
L(x,\xi)&=&\sup_{g\in \partial_xF(x,\xi),h\in\partial f(x)}\|g-h\|_*,\\
M_2&=&\sup_{x\in X}\pi[L(x,\cdot)].
\end{array}
$$
Note that adding to $F(x,\xi)$ a differentiable function $g$ of $x$: $F(x,\xi)\mapsto F(x,\xi)+g(x)$  does not affect the quantities $M_{1,\infty}$, $M_{1,\exp}$, and $M_2$.

Our goal is to compute upper bounds on $M_{1,{\infty}}$, $M_{1,\exp}$, and $M_2$ in the different settings
 of Section \ref{stoquadopt}.

\subsection{Quadratic risk minimization}
In this case
\begin{itemize}
\item $X=\{x=[x_1;...;x_n]\in\bR^{n}: x_1,...,x_{n}\geq0,\sum_{i=1}^{n} x_i=1\}$,
\item $\Xi$ is a part of the unit box $\{\xi=[\xi_1;...;\xi_n]\in\bR^n:\|\xi\|_\infty\leq 1\}$,
\item $F(x,\xi)=\kappa_0 {\xi^Tx} + {\kappa_1 \over2}\left({\xi^Tx}\right)^2$, with
$\kappa_1 \geq0$, and
$
f(x)=\kappa_0\mu^Tx + {\kappa_1 \over2}x^T\bE\{\xi\xi^T\}x,
$
where $\mu=\bE\{\xi\}$.
\end{itemize}
The parameters $M_1$, $M_2$, $R$ and $\Omega$ of construction can be set according to:
\be
M_{1}\leq 2|\kappa_0|+{\kappa_1 \over2},\;\;M_2= 2|\kappa_0|+\kappa_1 ,
\;\;R=1,\;\;
\Omega=\left\{\begin{array}{ll}1,&n= 1\\
\sqrt{2},& n=2\\
\ln(n)\sqrt{\frac{2\e}{1 + \ln(n)}},&n\geq3.\\
\end{array}\right..
\ee{firstb}
{\small \begin{quotation}
Indeed, for $\xi\in \Xi$ and $x\in X$, we get
$$
|F(x,\xi)-f(x)|\leq |\kappa_0|| (\xi-\mu)^Tx |+{\kappa_1 \over2}|x^T(V-\xi\xi^T)x|\leq
|\kappa_0|\|\xi-\mu\|_\infty+ \frac{\kappa_1}{2}
$$
(indeed, since $V$ is positive semidefinite with $\|V\|_\infty\leq1$ and $\|\xi\|_\infty\leq1$, we have $|x^T(V-\xi\xi^T)x|\leq1$ for all $x$ such that $\|x\|_1\leq 1$), and
$$
M_{1,\exp}\leq M_{1,\infty}\leq |\kappa_0|(1+\|\mu\|_\infty )+{\kappa_1 \over2}\leq 2|\kappa_0|+{\kappa_1 \over2}.
$$
Further, let us equip $\bR^n$ with the norm  $\|\cdot\|=\|\cdot\|_1$, so that $\|\cdot\|_*=\|\cdot\|_\infty$, and endow the unit ball of the norm with the distance generating function\footnote{For details, see e.g., 
\cite[Theorem 2.1]{nesterov2013first}}.
\begin{equation}\label{dgf.B}
\omega(x)={1\over p\gamma}\sum_{i=1}^n|x_i|^p,\;p=\left\{\begin{array}{ll}
2&\mbox{for }n\leq 2,\\
1+1/\ln(n)&\mbox{for }n\geq 3,\\
\end{array}\right.,\; \gamma= \left\{\begin{array}{ll}1,&n\leq1\\
{1\over 2},&n=2,\\
{1\over \e\ln(n)},&n\geq3\\
\end{array}\right.
\end{equation}
resulting in
$\Omega=\sqrt{2\over p\gamma}
\;\;\mbox{ and }
\;\;R=1.
$
Now let $x\in X$ and $\xi\in\Xi$, and let $g$ be a subgradient  of $F(x,\xi)$
with respect to $x$, and $h$ be a subgradient of $f$ at $x$. We have
\[g=\kappa_0\xi+\kappa_1 \xi(\xi^Tx),\;\;h=\kappa_0\mu+\kappa_1 Vx,\]
thus $$
\|g-h\|_*\leq| \kappa_0|\|\xi-\mu\|_\infty + \kappa_1   \|V - \xi \xi^\transp \|_{\infty}  \leq  |\kappa_0|(1+\|\mu\|_\infty)+ 2\kappa_1.
$$
We conclude that
\[
M_2\leq |\kappa_0|(1+\|\mu\|_\infty)+2\kappa_1 \leq 2|\kappa_0|+2\kappa_1.
\]
\end{quotation}
} 
\subsection{Gaussian VaR optimization}

Here the situation is as follows:
\begin{itemize}
\item $X=\{x=[x_1;...;x_n]\in\bR^{n}: x_1,...,x_{n}\geq0,\sum_{i=1}^{n} x_i=1\}$,
\item $\xi\sim \cN(0,\Sigma )$ on $\bR^n$,\;$\Sigma\succ 0$,
\item $F(x,\xi)=\kappa_0{\xi^Tx} + \kappa_1 |{\xi^Tx}|$, with
$\kappa_1 \geq0$.
\end{itemize}
We have $
f(x)=\sqrt{{2\over\pi}}\kappa_1 \sigma_x,
$ with $\sigma_x=\sqrt{x^T\Sigma x}$.
In this case one can set $\Omega$ and $R$ as in \rf{firstb}, along with
\[\begin{array}{rcl}
M_{1}& = & \left[\sqrt{{2\e^2\over \e^2-1}}|\kappa_0|+\sqrt{2}\kappa_1 \right]\smax,\\
M_2&=&(|\kappa_0|+\kappa_1 )\smax \sqrt{2(2+\ln n)}+\kappa_1 \smax\sqrt{2\over \pi},
\end{array}
\]
where $\smax^2=\max_{1\leq i\leq n}\Sigma_{i, i}$.
{\small\begin{quotation}
Indeed, we have  $\xi^Tx \sim\cN(0,\sigma^2_x)$,
we conclude that
$
f(x)=\kappa_1 \sqrt{{2\over\pi}}\sigma_x,
$
whence
$$
|F(x,\xi)-f(x)|\leq |\kappa_0||\xi^Tx|+\kappa_1 ||\xi^T x|-\sqrt{2/\pi}\sigma_x|=
\sigma_x\left[|\kappa_0||\eta_x|+\kappa_1 ||\eta_x|-\sqrt{2/\pi}|\right]
$$
where $\eta_x=\xi^Tx /\sigma_x\sim \cN(0,1)$.
By direct computation we get
$$
\pi[|\eta_x|]=\nu:=\sqrt{{2\e^2\over \e^2-1}}= 1.52...
$$
Next, setting $\vartheta=\sqrt{2/\pi}$ we observe that
$$
\begin{array}{l}
{1\over\sqrt{2\pi}}\int\exp\{||s|-\vartheta |^2/2-s^2/2\}ds=\vartheta \int_0^\infty \exp\{[s^2-2 \vartheta s+ \vartheta^2-s^2]/2\}ds\\
=
\vartheta \int_0^\infty\exp\{ \vartheta^2/2- \vartheta s\}ds=\exp\{ \vartheta^2/2\}<\exp\{1\},
\end{array}
$$
implying that
$$
\pi[||\eta_x|-\sqrt{2/\pi}|] \leq \sqrt{2}.
$$
As a result,
$$
\pi[F(x,\cdot)-f(x)] \leq \sigma_x\left[|\kappa_0|\pi[| \eta_x |  ] +\kappa_1 \pi[||\eta_x|-\sqrt{2/\pi}|]\right]\leq \sigma_x\left[\nu |\kappa_0|+ \sqrt{2}\kappa_1 \right].
$$
Taking into account that for all $x\in X$ $\sigma_x^2=x^T\Sigma x\leq \|\Sigma \|_\infty$, we arrive at
\begin{equation}\label{M1.C}
M_{1,\exp}\leq  \left[\nu|\kappa_0|+ \sqrt{2}\kappa_1 \right]\sqrt{\|\Sigma \|_\infty}=\left[\nu|\kappa_0|+\sqrt{2}\kappa_1 \right]\smax.
\end{equation}

Let $x\in X$, and let $g$ be a subgradient
with respect to $x$ of $F(x,\xi)$, and $h$ be a subgradient of $f(x)$. We  have
$$
g=\kappa_0\xi+\kappa_1 \xi\chi
$$
with $\chi=\chi(x,\xi)\in[-1,1]$, so that
$$
\|g\|_\infty\leq [|\kappa_0|+\kappa_1 ]\|\xi\|_\infty.
$$
Note that
\[
\partial \left[\sqrt{x^T\Sigma x}\right]=\left\{\begin{array}{ll}\left\{\left(x^T\Sigma x\right)^{-1/2}\Sigma x\right\},&x\neq0,\\
\left\{\Sigma^{1/2}u,\;\|u\|_2\leq 1
\right\},&x=0.\end{array}
\right.
\]
Therefore,  for all $h\in \partial f(x)$ one has
\[
\|h\|_\infty\leq \kappa_1 \sqrt{2\over \pi}\sup_{x\neq 0} {\|\Sigma x\|_\infty\over \sqrt{x^T\Sigma x}}=
\kappa_1 \sqrt{2\over \pi}\sup_{y\neq 0}{\|\Sigma^{1/2} y\|_\infty\over \|y\|_2}=
\kappa_1 \sqrt{2\over \pi}  \max_{1\leq i\leq n}\|\Sigma^{1/2}_i\|_2 \leq \kappa_1 \smax\sqrt{2\over \pi}
\]
(here $\Sigma^{1/2}_i$ stands for the $i$-th row of $\Sigma^{1/2}$),
and
$$
\|g-h\|_*=\|g-h\|_\infty\leq [|\kappa_0|+\kappa_1 ]\|\xi\|_\infty+\kappa_1  \smax\sqrt{2\over \pi}, 
$$
that is,
$$
L(x,\xi)\leq [|\kappa_0|+\kappa_1 ]\|\xi\|_\infty+\kappa_1  \smax\sqrt{2\over \pi}.
$$
We conclude that
\be
\pi[L(x,\cdot)]\leq [|\kappa_0|+\kappa_1 ]\pi[\|\xi\|_\infty]+\kappa_1 \smax  \sqrt{2\over \pi}.
\ee{pixi}
We now use the following simple result.\footnote{In fact, in the numerical experiments we have used a slightly better bound $M_2$ which can be defined as follows. Let $t_n$, $0<t_n<\smax$ be the unique solution of the equation
\[
{\tilde h}_n(t_n) = {n^{2t_n^2 \smax^2}\over 1-2t_n^2 \smax^2}=\e
\]
(observe that $\tilde h_n(\cdot)$ is monotone on $]0,\frac{1}{\sqrt{2} \smax}[$, so $t_n$ can be computed using bisection). The same reasoning as in the proof of Lemma \ref{lemgaussespinf} results in the bound
\begin{equation}\label{valueM2marko}
M_2 = \frac{(|\kappa_0|+\kappa_1 )}{ t_n } +  \kappa_1 \smax\sqrt{2\over \pi}.
\end{equation}
For instance, in the experiments of Section \ref{portfolioproblem}, for $\smax=\sqrt{6}$ and
$n \in \{2,10, 20, 100\}$, the values of $1/t_n$ (resp. of its upper bound $\smax \sqrt{2(2+\ln n)}$)
were $4.97, 6.46, 7.05, 8.27$ (resp. $5.68, 7.19, 7.74, 8.90$).
}

\begin{lemma}\label{lemgaussespinf} Let $\xi$ be a zero-mean Gaussian random vector in $\bR^n$, and let $\bar{\sigma}^2 \geq \max_{1\leq i\leq n}\bE\{\xi^2_i\}$. Then for $M\geq \bar{\sigma} \sqrt{2(2+\ln n)}$
$$
\bE\big\{ \e^{\|\xi \|_{\infty}^2/M^2 } \big\} \leq
\e.$$
\end{lemma}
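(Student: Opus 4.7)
The plan is to prove the lemma via a layer-cake (tail-integration) identity combined with the standard Gaussian union-bound tail estimate for $\|\xi\|_\infty$.

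First, since each coordinate $\xi_i$ is centered Gaussian with variance $\sigma_i^2\leq\bar\sigma^2$, the one-dimensional Gaussian tail bound $\Prob\{|\xi_i|>t\}\leq 2\e^{-t^2/(2\bar\sigma^2)}$ together with a union bound gives
\[
\Prob\{\|\xi\|_\infty^2>\tau\}\leq \min\bigl(1,\,2n\e^{-\tau/(2\bar\sigma^2)}\bigr),\qquad \tau\geq 0.
\]
Applying the layer-cake identity $\bE\{\e^Y\}=1+\int_0^\infty \e^y\Prob\{Y>y\}dy$ to the non-negative random variable $Y=\|\xi\|_\infty^2/M^2$ yields
\[
\bE\bigl\{\e^{\|\xi\|_\infty^2/M^2}\bigr\}\leq 1+\int_0^\infty \e^y\min\bigl(1,\,2n\e^{-M^2y/(2\bar\sigma^2)}\bigr)dy.
\]

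Next I would split the integral at $y_0:=(2\bar\sigma^2/M^2)\ln(2n)$, the point where the two bounds inside the minimum coincide. On $[0,y_0]$ the trivial bound gives $\int_0^{y_0}\e^y dy=\e^{y_0}-1$, and on $[y_0,\infty)$ the exponential tail bound contributes $2n\e^{-\beta y_0}/\beta$ with $\beta:=M^2/(2\bar\sigma^2)-1>0$ (which is positive under the hypothesis $M^2\geq 2\bar\sigma^2(2+\ln n)$). Using the identity $\beta y_0=\ln(2n)-y_0$, the two pieces telescope to
\[
\bE\bigl\{\e^{\|\xi\|_\infty^2/M^2}\bigr\}\leq \e^{y_0}\cdot\frac{M^2}{M^2-2\bar\sigma^2}.
\]

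Finally, the hypothesis $M^2\geq 2\bar\sigma^2(2+\ln n)$ yields $y_0\leq \ln(2n)/(2+\ln n)$ and $M^2/(M^2-2\bar\sigma^2)\leq (2+\ln n)/(1+\ln n)$, reducing the lemma to the elementary inequality
\[
\frac{\ln(2n)}{2+\ln n}+\ln\frac{2+\ln n}{1+\ln n}\leq 1.
\]
I expect this last verification to be the main technical point: the left-hand side tends to $1$ from below as $n\to\infty$, so one has to check monotonicity carefully. A direct derivative computation shows the inequality holds for every $n\geq 2$; the univariate case $n=1$ is handled separately by the crude bound $\bE\{\e^{\|\xi\|_\infty^2/M^2}\}\leq n/\sqrt{1-2\bar\sigma^2/M^2}$ (coming from $\e^{\max_i \xi_i^2/M^2}\leq \sum_i \e^{\xi_i^2/M^2}$ together with the Gaussian moment generating function $\bE\{\e^{\xi_i^2/M^2}\}=(1-2\sigma_i^2/M^2)^{-1/2}$), which at $M^2=4\bar\sigma^2$ evaluates to $\sqrt{2}<\e$.
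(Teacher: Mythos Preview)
Your proposal is correct and follows essentially the same route as the paper: a layer-cake representation of $\bE\{\e^{\|\xi\|_\infty^2/M^2}\}$ combined with the Gaussian union-bound tail, split at the crossover point, leading to the closed form $(\text{const})^{2\bar\sigma^2/M^2}\cdot M^2/(M^2-2\bar\sigma^2)$ and then an elementary one-variable inequality.

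The one substantive difference is that the paper uses the sharper one-sided Gaussian bound $\Prob\{|\xi_i|\geq r\}\leq \e^{-r^2/(2\bar\sigma^2)}$ (without your factor $2$), obtaining $n^{2\bar\sigma^2/M^2}/(1-2\bar\sigma^2/M^2)$ in place of your $(2n)^{2\bar\sigma^2/M^2}\cdot M^2/(M^2-2\bar\sigma^2)$. This buys a cleaner endgame: their final inequality reduces to $\e^{-x}\leq 1-x/2$ on $[0,1]$, which covers every $n\geq 1$ at once, whereas your extra factor $2$ forces the separate treatment of $n=1$ and a slightly more delicate calculus check for $n\geq 2$. (Your monotonicity sketch is sound: setting $u=\ln n$, the left side has a unique minimum at $u=\ln 2/(1-\ln 2)$, is below $1$ at $u=\ln 2$, and increases to $1$ as $u\to\infty$, so it stays strictly below $1$ for $u\geq\ln 2$.) If you replace $2n$ by $n$ in your tail bound, your argument collapses to exactly the paper's and no case split is needed.
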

\proof
Let $\eta_n=\max_{1\leq i\leq n} |\xi_i|$. We have the following well-known fact:
\[
\psi_n(r):=\Prob \{\eta_n\geq r\}\leq \min\left\{1,n\e^{-{r^2\over 2\bar{\sigma}^2}}\right\}.
\]
Therefore, for $|t|<(\sqrt{2}\bar{\sigma})^{-1}$,
\bse
 \bE\big\{\e^{t^2\eta^2_n}\big\}&=&-\int_0^\infty \e^{t^2r^2} d\psi_n(r)=
 1+\int_{0}^{\infty} 2t^2r\e^{t^2r^2}\psi_n(r)dr\\
 &\leq& \e^{2t^2\bar{\sigma}^2\ln n}
 +{2nt^2}\int_{\bar{\sigma}\sqrt{2\ln n}}^\infty r\exp\Big\{-{(1-2t^2\bar{\sigma}^2)r^2\over 2\bar{\sigma}^2}\Big\}dr\\
 &=& \e^{2t^2\bar{\sigma}^2\ln n}+{2t^2\bar{\sigma}^2\over 1-2t^2\bar{\sigma}^2}\e^{2t^2\bar{\sigma}^2\ln n}={n^{2t^2\bar{\sigma}^2}\over 1-2t^2\bar{\sigma}^2}.
 \ese
 Note that $\e^{-x}\leq 1-x/2$ for $0\leq x\leq 1$.  Thus for all $n\geq 1$ and $t\leq \big(\bar{\sigma}\sqrt{2(2+\ln n)}\big)^{-1}$,
 \[
 {n^{2t^2\bar{\sigma}^2}\over 1-2t^2\bar{\sigma}^2}\leq  {\e^{\ln n\over 2+\ln n}
 \over 1-{1\over {2+\ln n}}}=\e^{1-{2\over {2+\ln n}}}{2+\ln n\over 1+\ln n}
 \leq \e.\eqno{\mbox{\qed}}
 \]

Finally, using the result of the lemma we conclude from \rf{pixi} that one can take for $M_2$ the expression
\begin{equation}\label{firstM2marko}
(|\kappa_0|+\kappa_1 )\smax \sqrt{2(2+\ln n)}+\kappa_1 \smax\sqrt{2\over \pi}.
\end{equation}
\end{quotation}}

\subsection{CVaR optimization}

Consider the portfolio problem of Section \ref{portboundedreturns}. With some terminology abuse, in what follows,  we refer
to  the special case  $n=1$ with $x_1\equiv 1$ as to the case of $n=0$.
\begin{itemize}
\item $X=\{x=[x_0;x_1;...;x_n]\in\bR^{n+1}: \;|x_0|\leq 1,\, x_1,...,x_{n}\geq0,\,\sum_{i=1}^{n} x_i=1\}$,
\item $\Xi$ be a part of the unit box $\{\xi=[\xi_1;...;\xi_n]\in\bR^n:\|\xi\|_\infty\leq 1$\},
\item $F(x,\xi)=\kappa_0\sum_{i=1}^n\xi_ix_i + \kappa_1 \left[x_0+{1\over\epsilon}[\sum_{i=1}^n\xi_ix_i-x_0]_+\right]$, with
$\kappa_0,\kappa_1 \in[0,1]$.
\end{itemize}
The parameters $M_1$, $M_2$, $R$ and $\Omega$ of construction can be set according to:
\bse
& M_{1} =   2\left(\kappa_0+{\kappa_1 \over\epsilon}\right),\;\;&M_2= \left\{\begin{array}{ll}{\kappa_1 \over\epsilon},&n=0,\\
\sqrt{\left({\kappa_1 \over\epsilon}\right)^2+4 \left(\kappa_0+ {\kappa_1 \over\epsilon}\right)^2},&n\geq1,
\end{array}\right.\\
&
R=\left\{\begin{array}{ll}1,&{n=0}\\
\sqrt{2},&{n\geq 1}.
\end{array}\right.,\;\;
&\Omega=\left\{\begin{array}{ll}1,&n=0,\\
\sqrt{2},& n=1,\\
\sqrt{3},& n=2,\\
\sqrt{1 + \frac{2 e  (\ln (n))^2 }{1 + \ln (n)}},&n\geq3.\\
\end{array}\right.
\ese
{\small\begin{quotation}
Indeed, denoting ${\xi_x}=\xi^T x$ and $\mu_i=\bE\{\xi_i\}$, we have
$$
f(x)=\kappa_0\sum_{i=1}^{{n  } }\mu_ix_i +\kappa_1 \left[x_0+{1\over\epsilon}\bE\{[{\xi_x}-x_0]_+\}\right],
$$
whence for $\xi\in \Xi$ and $x\in X$
$$
|F(x,\xi)-f(x)|\leq \kappa_0|\sum_{i=1}^n [\xi_i-\mu_i]x_i|+{\kappa_1 \over\epsilon}|[{\xi_x}-x_0]_+-\bE\{[{\xi_x}-x_0]_+\}|.
$$
We have  $|{\xi_x}|\leq1$, whence
{
$0 \leq [{\xi_x}-x_0]_+\leq 1+[-x_0]_+$ and
$
0  \leq \bE\{[{\xi_x}-x_0]_+\}\leq 1+[-x_0]_+.
$
Then,
$$
-2 \leq -1-[-x_0]_+ \leq [{\xi_x}-x_0]_+-\bE\{[{\xi_x}-x_0]_+\}\leq 1+[-x_0]_+,
$$
so that
$$
|[{\xi_x}-x_0]_+-\bE\{[{\xi_x}-x_0]_+\}|\leq 2.
$$
}
\if{
$-1+[-x_0]_+\leq [{\xi_x}-x_0]_+\leq 1+[-x_0]_+$, and
$$
-1+[-x_0]_+\leq \bE\{[{\xi_x}-x_0]_+\}\leq 1+[-x_0]_+.
$$
Then,
$$
-1+[-x_0]_+-[1+[-x_0]_+]\leq [{\xi_x}-x_0]_+-\bE\{[{\xi_x}-x_0]_+\}\leq 1+[-x_0]_+-[-1+[-x_0]_+],
$$
so that
$$
|[{\xi_x}-x_0]_+-\bE\{[{\xi_x}-x_0]_+\}|\leq 2.
$$
}\fi

We conclude that
\begin{equation}\label{M1.A}
M_{1,\exp}\leq M_{1,\infty}\leq \kappa_0(1+\|\mu\|_\infty)+{2\kappa_1 \over\epsilon}\leq 2\left(\kappa_0+{\kappa_1 \over\epsilon}\right).
\end{equation}
In what follows, for a vector from $\bR^{n+1}$, say, $z=[z_0;z_1;...;z_n]$, we set $z'=[z_1;...;z_n]$, so that $z=[z_0;z']$.  Let us define norm $\|\cdot\|$ on $\bR^{n+1}$ as
$$
\|[x_0;x']\|=\sqrt{x_0^2+\|x'\|_1^2},
$$
implying that
$$
\|[x_0;x']\|_*=\sqrt{x_0^2+\|x'\|_\infty^2}.
$$
A distance-generating function $\omega([x_0;x'])$ for the unit ball of the norm $\|\cdot\|$ can be taken as
$$
\omega([x_0;x'])={1\over 2}x_0^2+{1\over p\gamma}\sum_{i=1}^n|x_i|^p,\;p=\left\{\begin{array}{ll}
2,&n\leq 2\\
1+1/\ln(n),&n\geq 3\\
\end{array}\right., \;\gamma= \left\{\begin{array}{ll}1,&{n\leq1}\\
{1\over 2},&n=2,\\
{1\over \e\ln(n)},&n\geq3,\\
\end{array}\right.
$$
resulting in

\begin{equation}\label{Omega.A}
{
\Omega=\left\{\begin{array}{ll}1,& {n=0}\\
\sqrt{{1}+{2\over p\gamma}},& {n\geq1} ,
\end{array}\right.
}
 \;\;\mbox{and}\;\;
{
R=\left\{\begin{array}{ll}1,&{n=0}\\
\sqrt{2},&{n\geq 1}.
\end{array}\right.
}
\end{equation}
Let $x\in X$ and $\xi\in\Xi$, and let $g=[g_0;g']$ be a subgradient  of $F(x,\xi)$ with respect to $x$, and $h$ be a subgradient of $f$ at $x$. We clearly have
\[
g_0=\kappa_1 -{\kappa_1 \over\epsilon}\chi_0,\;\;
g'=\kappa_0\xi+{\kappa_1 \over\epsilon}\xi \chi_1,\;\;
h_0=\kappa_1 -{\kappa_1 \over\epsilon}\chi_2,
\]
where $\chi_i\in[0,1]$.
Next, for $n \geq 2$,
$$
\begin{array}{lll}
|f([x_0;x']) - f([x_0;y'])|&=&
| \kappa_0 \mu^T (x' - y') + \frac{\kappa_1 }{\varepsilon} \Big( \bE\{[\xi^T x'   -x_0]_+\} - \bE\{[\xi^T y'   -x_0]_+\} \Big)  |\\
& \leq & \kappa_0 \|\mu\|_{\infty} \|x' - y'\|_1 + \frac{\kappa_1 }{\varepsilon}\bE\{ |\xi^T(x' -y')|\}\\
& \leq & (\kappa_0 + \frac{\kappa_1 }{\varepsilon}) \|x' - y'\|_1.
\end{array}
$$
It follows that $f([x_0;x'])$ is Lipschitz continuous in $x'$ with constant $\kappa_0 + \frac{\kappa_1 }{\varepsilon}$
with respect to $\|\cdot\|_1$ and we have $\|h'\|_\infty\leq \kappa_0 + {\kappa_1 \over\epsilon}$.
As a result, we obtain for $n \geq 2$
$$
\|g-h\|_*=\sqrt{|g_0-h_0|^2+\|g'-h'\|_\infty^2}\leq \sqrt{\left({\kappa_1 \over\epsilon}\right)^2+4 \left(\kappa_0+{\kappa_1 \over\epsilon}\right)^2}
$$
while $\|g-h\|_*\leq {\kappa_1 \over\epsilon}$ for $n=1$.

We conclude that
\begin{equation}\label{M2.A}
M_2=\left\{\begin{array}{ll}{\kappa_1 \over\epsilon},&{n=0},\\
\sqrt{\left({\kappa_1 \over\epsilon}\right)^2+4\left(\kappa_0+{\kappa_1 \over\epsilon}\right)^2},
&{n\geq1}.
\end{array}\right.
\end{equation}
\if{
Finally, we have $|h_0|\leq |\kappa_1 /\epsilon-1|$ and, as we have seed, $\|h'\|_\infty\leq \kappa_1 /\epsilon$, whence
\begin{equation}\label{L.A}
L\leq {\sqrt{(\kappa_1 -\kappa_0\epsilon)^2+\kappa_1 ^2}\over\epsilon}.
\end{equation}
}\fi
\end{quotation}
}

\bibliographystyle{abbrv}
\bibliography{References}

\end{document}